\colorlet{cite}{LimeGreen!50!Green}
\tikzset{ 
  baseline=-2.3pt,
  text height=1.5ex, text depth=0.25ex,
  >=stealth,
  node distance=2cm,
  mid/.style={fill=white,inner sep=2.5pt},
}
\theoremstyle{plain}	
\numberwithin{equation}{section}
\newtheorem{theorem}[equation]{Theorem}
\newtheorem*{theorem*}{Theorem}
\newtheorem{proposition}[equation]{Proposition}
\newtheorem{lemma}[equation]{Lemma}
\newtheorem*{lemma*}{Lemma}
\newtheorem*{corollary*}{Corollary}
\newtheorem*{remark*}{Remark}
\theoremstyle{definition}
\newtheorem{definition}[equation]{Definition}
\newtheorem{remark}[equation]{Remark}
\newtheorem{example}[equation]{Example}
\newtheorem{notation}[equation]{Notation}
\newcommand{\ce}{\mathrel{\mathop:}=}
\DeclareMathOperator{\Pic}{Pic}
\DeclareMathOperator{\Tot}{Tot}
\DeclareMathOperator{\Ext}{Ext}
\DeclareMathOperator{\rk}{rk}
\newcommand{\trevo}{\textnormal{\tiny{\FourClowerSolid}}} 
\newcommand{\blank}{\mkern3mu\cdot\mkern3.7mu}
\newcommand{\llrr}[1]{%
 [\mkern-3.1mu[#1]\mkern-3.1mu]}
\newcommand{\Wedge}{\mathrm{\Lambda}\mkern-2mu}
\newcommand{\SC}{0.7}
\newcommand{\ccfan}{
\begin{minipage}{0.1\textwidth}
\centering \vspace{4pt}
\begin{tikzcd}
	\fill[pattern=north west lines, pattern color=SkyBlue] (0,0) rectangle (1*\SC,1*\SC);

	\draw[very thick,->] (0,0) -- (1*\SC,0);
	\draw[very thick,->] (0,0) -- (0,1*\SC);
	\begin{scope}
		\foreach \x in {0,1}
		{	\foreach \y in {0,1}
			{
			\filldraw (\x*\SC,\y*\SC) circle (0.03cm);
			}
		}
	\end{scope}	
\end{tikzcd}
\vspace{4pt}
\end{minipage}
}
\newcommand{\cfan}{
\begin{minipage}{0.05\textwidth}
\centering \vspace{4pt}
\begin{tikzcd}
	\draw[very thick,->] (0,0) -- (0,1*\SC);
	\begin{scope}
		\foreach \x in {0}
		{	\foreach \y in {0,1}
			{
			\filldraw (\x*\SC,\y*\SC) circle (0.03cm);
			}
		}
	\end{scope}	
\end{tikzcd}
\vspace{4pt}
\end{minipage}
}
\newcommand{\Zzero}{
\begin{minipage}{.2\textwidth}
\centering \vspace{4pt}
\begin{tikzcd}
	\fill[pattern=north west lines, pattern color=SkyBlue] (0,0) rectangle (-1*\SC,1*\SC);
	\fill[pattern=north east lines, pattern color=SkyBlue] (0,0) rectangle (1*\SC,1*\SC); 
	\draw[very thick,->] (0,0) -- (1*\SC,0);
	\draw[very thick,->] (0,0) -- (0,1*\SC);
	\draw[very thick,->] (0,0) -- (-1*\SC,0);
	\begin{scope}
		\foreach \x in {-1,...,1}
		{	\foreach \y in {0,1}
			{
			\filldraw (\x*\SC,\y*\SC) circle (0.03cm);
			}
		}
	\end{scope}	
\end{tikzcd}
\vspace{4pt}
\end{minipage}
}
\title[Quantization of Calabi--Yau threefolds]{Quantizations  of local Calabi--Yau threefolds \\
and their moduli of vector bundles}
\author{
E. Ballico, 
E. Gasparim,  
F. Rubilar, 
B. Suzuki}
\address{
Ballico - Dept. Mathematics, Univ. of Trento,  Povo  Italy; ballico@science.unitn.it, \newline
Gasparim -  Depto. Matem\'aticas, Univ. Cat\'olica del Norte,  Chile;  etgasparim@gmail.com, \newline 
Rubilar - Depto. Matem\'aticas, Univ. del Bío-Bío, Chile; francisco.rubilar.arriagada@gmail.com,\newline
Suzuki* - Depto. Matem\' atica, Univ. de S\~ao Paulo, Brazil; obrunosuzuki@gmail.com. \newline
*Corresponding author
}
\begin{document}

\begin{abstract}We describe the geometry of noncommutative deformations of local Calabi--Yau threefolds, 
showing that the choice of Poisson structure strongly influences the geometry of the quantum moduli space.
\end{abstract}
\maketitle

\textbf{Keywords:} Deformation Quantization, Calabi--Yau threefold, Moduli space of vector bundles, Poisson structures, constructible sheaf.

\textbf{MSC:} 14J60, 53D55 (Primary),  14A22  (Secondary)

\tableofcontents

\section{Introduction}
We discuss moduli of vector bundles on those noncommutative local Calabi--Yau threefolds that occur 
in noncommutative crepant resolutions of the generalised conifolds $xy-z^nw^m=0$. 
Such crepant resolutions  require lines of type $(-1,-1)$ and $(-2,0)$, that is, those locally modelled by
 $$W_1\ce \Tot(\mathcal O_{\mathbb P^1}(-1)\oplus \mathcal O_{\mathbb P^1}(-1)) \quad 
 \text{or }\quad  W_2\ce \Tot(\mathcal O_{\mathbb P^1}(-2)\oplus \mathcal O_{\mathbb P^1}(0) ).$$ 
Their appearance is balanced in a precise sense described in \cite{GKMR} so that no particular 
configuration of such lines is more likely to occur in a crepant resolution than any other.

 Our results show that the structure of the quantum moduli space (Def.~\ref{ncmoduli}) of vector bundles over a noncommutative 
deformation varies drastically depending on the choice of a Poisson structure.

 In the 2-dimensional case, \cite{BG} described 
 the geometry of noncommutative deformations of the local surfaces $Z_k\ce  \Tot(\mathcal O_{\mathbb P^1}(-k))$, 
showing that the quantum moduli space of instantons over a noncommutative deformation $(\mathcal Z_k, \sigma)$ 
can be viewed as the étale space of a constructible sheaf over the classical moduli space of instantons on $Z_k$.
While in  2 dimensions vector bundles occur as mathematical representations of instantons, 
in the 3-dimensional case vector bundles occur as mathematical descriptions of BPS states, with $W_1$ and $W_2$ 
appearing as building blocks, as described in \cite{GKMR,GSTV, OSY}.

 In this work, we describe the geometry of    noncommutative deformations   $\mathcal W$ 
 of a Calabi--Yau threefold $W$, showing that the quantum moduli space of vector bundles on 
  together with the map taking a vector bundle on $\mathcal W$ to its classical limit
$$
\begin{tikzpicture}[baseline=-.4em]
\matrix (m) [matrix of math nodes, row sep=1.25em, inner sep=2pt,
column sep=0, ampersand replacement=\&]
{\mathfrak M_j^{\hbar}(\mathcal W, \sigma) \\ \mathfrak M_j( W) \\};
\path[-stealth,line width=.6pt,font=\scriptsize]
(m-1-1) edge (m-2-1)
;
\end{tikzpicture}
$$
has the structure of a constructible sheaf, whose rank and singularity set depend explicitly on the
choice of a noncommutative deformation.
In particular, we describe the geometry of noncommutative deformations
of some crepant resolutions. It is at this point yet unclear how these compare  with Van den Bergh's noncommutative crepant resolutions \cite{V}. Other approaches to  noncommutative  local Calabi--Yau threefolds  using the languages  of quivers and motives appeared in 
\cite{N,CMPS} with similar jump behaviour being observed,  and  applications to noncommutative gauge theory were 
described in \cite{S}.

To each Poisson structure $\sigma$ on $W_k$, with $k=1$ or $k=2$ there corresponds a noncommutative deformation 
$(\mathcal W_k, \mathcal A^\sigma)$ with $\mathcal A^\sigma = (\mathcal O_{W_k} \llrr{\hbar},\star_\sigma)$
where $\star_\sigma$ is the star product corresponding to $\sigma$. All of these Poisson structures were described in 
\cite{BGKS} in terms of generators over global functions; when $\sigma$ is one of such  generators, we refer to it as a 
{\it basic} Poisson structure.
 There exist Poisson structures for which
all brackets vanish on the first formal neighbourhood of $\mathbb P^1 \subset W_k$;  we call them {\it extremal}
Poisson structures, they behave very differently from the basic ones. Our main results are:

\begin{theorem*}[\ref{e1},\ref{e2}] Let $k=1$ or $2$.
If $\sigma$ is an extremal Poisson structure on $W_k$, then
 the quantum moduli space $\mathfrak M_j^{\hbar} (\mathcal W_k, \sigma)$ can 
be viewed as the étale space of a constructible sheaf  $\mathcal E_k$ of generic 
rank $2j-k-1$ over the classical moduli space 
$\mathfrak M_j (W_k)$ with singular stalks of  all  ranks up to $4j -k-4$.
\end{theorem*}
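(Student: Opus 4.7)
The plan is to adapt the strategy developed for local surfaces in \cite{BG} to the three-dimensional geometry of $W_k$. I would start by fixing the standard two-chart covering $W_k = U_0 \cup U_1$ with $U_i \cong \mathbb{C}^3$ and using the explicit presentation of rank-two bundles by canonical transition matrices: a vector bundle of invariant $j$ corresponds to
\[
T_j \;=\; \begin{pmatrix} z^j & p \\ 0 & z^{-j} \end{pmatrix},
\]
where the polynomial $p$ in the base and fibre coordinates represents a point of the classical moduli space $\mathfrak{M}_j(W_k)$.

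Given an extremal Poisson structure $\sigma$, the next step is to classify the lifts of each such $T_j$ to an invertible cocycle $T_j^\hbar$ with entries in $\mathcal{A}^\sigma(U_0 \cap U_1)$ that reduces to $T_j$ at $\hbar = 0$, modulo the star-product equivalence $T_j^\hbar \sim \Delta_0 \star T_j^\hbar \star \Delta_1^{-1}$ with $\Delta_i$ invertible over $U_i$. Here the extremal hypothesis is used decisively: since $\sigma$ vanishes on the first formal neighbourhood of $\mathbb{P}^1 \subset W_k$, the Poisson bivector lies in the square of the ideal of fibre coordinates, so every correction $f \star_\sigma g - fg$ has order at least two in the fibre directions. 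This truncation is what forces the quantum moduli space to be finite-dimensional over each classical point, in sharp contrast with the behaviour expected for a basic $\sigma$.

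I would then solve the cocycle conditions order by order in $\hbar$. At each order the obstruction to reaching canonical form lives in a {\v C}ech cohomology group on $\mathbb{P}^1$ with coefficients in a sum of line bundles $\mathcal{O}(-n)$ twisted by fibre monomials, so the residual freedom is a finite-dimensional vector space that varies semicontinuously with $p$. Counting the fibre monomials of bidegrees admissible on $W_k$ that survive after killing coboundaries should yield the generic rank $2j-k-1$. On the loci in $\mathfrak{M}_j(W_k)$ where $p$ admits an additional factorization or reduces to a lower-degree representative, further coboundaries become available, extra $\hbar$-parameters remain free, and the rank of the stalk jumps, reaching the upper bound $4j-k-4$ when $p$ degenerates maximally.

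The main obstacle will be the simultaneous control of \emph{finiteness} and \emph{constructibility}: one must show that the tower of order-$\hbar^n$ corrections stabilizes into a finite-rank constructible sheaf rather than producing an unbounded cascade of independent deformation parameters. The extremal property enters essentially here, truncating the star-product Taylor series in the relevant cohomological degrees and making the cocycle equations automatically solvable for $n$ sufficiently large. Assembling the stalks stratum by stratum produces the sheaf $\mathcal{E}_k$, whose étale space is tautologically identified with $\mathfrak{M}_j^\hbar(\mathcal{W}_k, \sigma)$ via the canonical forms. The arguments for $k=1$ and $k=2$ run in parallel; the different geometry of the second chart of $W_k$ is what accounts for the $-k$ shift in both the generic and the extremal rank formulas.
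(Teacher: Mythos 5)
Your general setup matches the paper's: fix the canonical transition matrix $\bigl(\begin{smallmatrix} z^j & p \\ 0 & z^{-j}\end{smallmatrix}\bigr)$, work fibrewise over the classical moduli space, and identify the stalk over $p$ with the set of first-order tails modulo the gauge action. But you have the role of the extremal hypothesis essentially inverted. ``Extremal'' means all Poisson brackets vanish on the first formal neighbourhood of $\ell \subset W_k$; its effect is to kill every bracket term in the $(1,2)$-entry of the isomorphism equation, leaving only $p(d'-a')z^j + (p'-q')z^j$. That \emph{removes} gauge freedom rather than supplying a helpful truncation: the only parameters left to absorb $p'-q'$ are the coefficients $\lambda_i$ of the scalar function $d'_0 - a'_0$, so the stalks become \emph{large}. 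For a basic $\sigma$ the bracket terms supply enough free coefficients that every $q'$ is equivalent to every $p'$ and the quantum moduli space collapses isomorphically onto the classical one (Theorems \ref{iso1}, \ref{iso2}). Your claim that the extremal condition is ``what forces the quantum moduli space to be finite-dimensional over each classical point, in sharp contrast with the behaviour expected for a basic $\sigma$'' reverses this contrast; relatedly, your jumping mechanism (``further coboundaries become available \ldots{} the rank of the stalk jumps'') is internally inconsistent, since more coboundaries would mean more identifications and a \emph{smaller} stalk. The actual mechanism is that the stalk at $p$ is the cokernel of an explicit matrix $\mathfrak U_p$ whose columns are successive shifts of the coefficient vector of $p$; the generic rank $2j-k-1$ is (dimension of the first-neighbourhood extension space) minus the generic rank of $\mathfrak U_p$, and the stalk jumps exactly where $\mathfrak U_p$ drops rank, up to $4j-k-4$ when $p$ degenerates maximally. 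Your proposal never sets up this linear algebra, so the two rank formulas --- and the claim that \emph{all} intermediate ranks occur --- are asserted rather than derived.

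A second, lesser issue: the object $\mathfrak M_j^{\hbar}(\mathcal W_k,\sigma)$ is by definition the moduli at the cut-off $\hbar^2=0$ (quantisation to level $1$), so the ``main obstacle'' you identify --- controlling an infinite tower of order-$\hbar^n$ corrections and proving it stabilises --- does not arise in the statement being proved. The entire analysis takes place at first order in $\hbar$, where the star product contributes only through the single bidifferential operator $B_1 = \{\,\cdot\,,\,\cdot\,\}_\sigma$, and it is precisely the vanishing of that contribution on the first formal neighbourhood that defines the extremal case.
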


If  $\sigma'$ is another Poisson structure on $W_k$, then the corresponding sheaf $\mathcal E'_k$ is a 
subsheaf of $\mathcal E_k$, with the smallest possible sheaf occurring for basic Poisson structures.

\begin{theorem*}[\ref{iso1},\ref{iso2}] Let $k=1$ or $2$. If  $\sigma $ is a basic Poisson structure on $W_k$,
then the quantum moduli space $\mathfrak M^{\hbar}_j(\mathcal W_k, \sigma)$
and its classical limit  are isomorphic:
 $$\mathfrak M^{\hbar}_j(\mathcal W_k, \sigma)\simeq \mathfrak M_j(W_k)\simeq \mathbb P^{4j-5}.$$
 \end{theorem*}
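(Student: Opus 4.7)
The plan is to establish that the classical limit map $\mathfrak M^\hbar_j(\mathcal W_k,\sigma)\to \mathfrak M_j(W_k)$ displayed in the introduction is an isomorphism when $\sigma$ is basic, by combining the Čech description of bundles on the two-chart cover of $W_k$ with an order-by-order analysis in $\hbar$. The second identification $\mathfrak M_j(W_k)\simeq \mathbb P^{4j-5}$ is purely classical and is inherited from standard results for bundles on $W_k$, so the whole task concentrates on the first isomorphism.

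First I would fix the two affine patches $U,V$ covering $W_k$, with $U\cap V$ a $\mathbb C^\times$-bundle over the base direction. A bundle on $\mathcal W_k$ of splitting type $(j,-j)$ is encoded by a transition matrix $T\in GL_2(\mathcal A^\sigma(U\cap V))$ modulo star-gauge on each chart. In the classical case, such a $T$ admits a normal form that is upper triangular with off-diagonal entry a polynomial in a prescribed finite range of coordinate monomials, and the projectivised parameter space for this entry recovers $\mathbb P^{4j-5}$.

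The heart of the argument is the following lifting statement: for $\sigma$ basic, every quantum transition matrix is star-gauge equivalent to one whose entries are $\hbar$-independent, i.e.\ literally a classical normal form. I would prove this inductively on the order in $\hbar$. At order $n$, after all lower orders have been normalised, the outstanding correction $T_n$ is constrained by a linear equation whose obstruction class lies in the Čech $H^1$ of the endomorphism sheaf of the classical bundle, twisted by the brackets of $\sigma$ with the entries of $T_0$. The key computation is that for each basic generator of the Poisson module of \cite{BGKS}, the bracket with every monomial appearing in the normal form of $T_0$ is either zero or a Čech coboundary on $\{U,V\}$. This kills the obstruction at every order and gives surjectivity of the classical limit map on isomorphism classes.

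Injectivity then follows because two classical normal forms that become star-gauge equivalent over $\mathcal W_k$ must already coincide at $\hbar=0$, and the same inductive mechanism shows that any higher-order component of the gauge transformation between them acts trivially on the normalised form. To promote the pointwise bijection to an isomorphism of moduli spaces, I would check that the inverse lift is algebraic in families, which reduces to the linearity in $T_0$ of each inductive step. The main obstacle is the cohomological computation at the heart of the third paragraph; it reduces to a finite case analysis through the list of basic Poisson generators from \cite{BGKS} for $W_1$ and $W_2$, and it is precisely this vanishing that fails for extremal $\sigma$, producing the nontrivial generic rank $2j-k-1$ of Theorems \ref{e1} and \ref{e2}.
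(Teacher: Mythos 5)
Your high-level plan (normalise the transition matrix in $\hbar$ by star-gauge and show the classical limit map is a bijection on classes) has the same shape as the paper's argument, but the key computation you isolate in your third paragraph is misidentified -- in fact it is inverted. First, surjectivity of the classical limit map is not where the work is: the canonical matrix $\left(\begin{smallmatrix} z^j & p_0 \\ 0 & z^{-j}\end{smallmatrix}\right)$ with no $\hbar$-corrections is already an invertible $\mathcal A$-matrix, so every classical class lifts for free. The entire content of the theorem is injectivity, i.e.\ transitivity of the star-gauge action on the set of first-order corrections $q'$ over a fixed $p$. Second, the directions in which $q'$ can be moved are spanned by (i) the classical C\v{e}ch coboundaries coming from the order-$\hbar$ gauge parameters $a',b',d',\dotsc$, and (ii) the $B_1$-terms, which are Poisson brackets of the \emph{order-$\hbar^0$} gauge parameters (classical automorphisms of $E_0$ on the first formal neighbourhood, e.g.\ $a_1^1,a_1^2,c$) with the entries of $T_0$. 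The coboundaries (i) alone never suffice: their span is exactly what survives for an \emph{extremal} Poisson structure, where all brackets vanish on $\ell^{(1)}$ and the answer is a nontrivial constructible sheaf of generic rank $2j-k-1$ (Theorems \ref{e1}, \ref{e2}). The theorem for basic $\sigma$ holds precisely because the bracket terms (ii) are \emph{not} zero and \emph{not} contained in the coboundaries; paired with the free functions $a_1^1(z), a_1^2(z), c$ they fill out the remaining coefficients of the monomials $z^iu_1, z^iu_2$ in expression \ref{12term}. So if your claimed vanishing/coboundary property held, you would be proving the extremal theorems, not \ref{iso1} and \ref{iso2}. What actually has to be verified is a surjectivity statement for an explicit linear map from gauge coefficients to the coefficient space of the $(1,2)$-entry, carried out monomial by monomial for each generator of \cite{BGKS}; it is a largeness statement about the brackets, not a vanishing one.

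Two smaller points. The paper's $\mathfrak M^{\hbar}_j$ is by definition the truncation modulo $\hbar^2$, so only the first order of your induction is needed for the stated theorem (the all-orders normalisation is a stronger claim you would have to justify separately, since at order $n$ the brackets of the order-$(n-1)$ gauge data re-enter). And your injectivity step as phrased -- ``two classical normal forms that become star-gauge equivalent must already coincide at $\hbar=0$'' -- is not right: the classical limit of a star-gauge equivalence is only a classical equivalence, and classical equivalence of canonical forms is projectivisation rather than equality, so the bijection must be set up modulo that identification.
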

 
 Therefore, comparing these results, we see that the choice of  Poisson structure
  has a strong influence  on the geometry of the quantum moduli space.

\section{Noncommutative deformations}\label{ncsec}

A {\it holomorphic Poisson structure}
on a complex manifold (or smooth complex algebraic variety) $X$ is given by a holomorphic bivector field
$\sigma \in H^0 (X, \Wedge^2 \mathcal T_X)$ whose Schouten--Nijenhuis bracket $[\sigma, \sigma] \in H^0 (X, \Wedge^3 \mathcal T_X)$ is zero. The associated Poisson bracket is then given by the pairing $\langle \blank {,} \blank \rangle$ between vector fields and forms
$
\{ f, g \}_\sigma = \langle \sigma, \mathrm d f \wedge \mathrm d g \rangle.
$

To obtain a noncommutative deformation of $X$ one must first promote the Poisson structure to a  {\it star product} on $X$, that   is, a $\mathbb C \llrr{\hbar}$-bilinear associative product
$
\star \colon \mathcal O_X \llrr{\hbar} \times \mathcal O_X \llrr{\hbar} \to \mathcal O_X \llrr{\hbar}
$
which is of the form
$
f \star g=  fg + \sum_{n=1}^\infty B_n (f,g) \, \hbar^n$
where the $B_n$ are bidifferential operators.

The   pair $(X, \star_\sigma)$ is called a 
{\it deformation quantization} of $(X, \sigma)$ when the star product on $X$ satisfies $B_1 (f, g) = \{ f, g \}_\sigma$.

For a holomorphic Poisson manifold $(X, \sigma)$  with associated Poisson bracket $\{ \blank {,} \blank \}_\sigma$, 
 the {\it sheaf of formal functions with holomorphic coefficients on the quantization $(X, \star_\sigma)$} 
 is 
 $$\mathcal A^\sigma := (\mathcal O_X \llrr{\hbar},\star_\sigma).$$

When we work with a fixed Poisson structure, we use the abbreviated notations $\mathcal A$, $\{ \blank {,} \blank \}$ and $\star$. We also use the cut to order $n$ represented as $\mathcal A^{(n)} = \mathcal O_X[[\hbar]]/ \hbar^{n+1}$.

The existence of star products on Poisson manifolds was  proven in the seminal papers
of Kontsevich \cite{kontsevich1, kontsevich2}. 
For  a complex algebraic variety $X$ with structure sheaf $\mathcal{O}_X$, suppose
\begin{equation}\label{coh}
H^1 (X, \mathcal{O}_X) =H^2 (X, \mathcal{O}_X) = 0,
\end{equation}
then 
\[
\{ \text{\rm Poisson deformations of $\mathcal O_X$} \} / {\sim} \; \leftrightarrow \; \{ \text{\rm associative deformations of $\mathcal O_X$} \} / {\sim}
\]
where $\sim$ denotes gauge equivalence \cite[Cor.\thinspace 11.2]{Y}. 

We will use the following result.
\begin{lemma}\label{cone} \cite[Prop.\thinspace 1]{BGKS} 
Let   $X$ be a smooth complex threefold and $\sigma$ a Poisson structure on $X$, 
then $f\sigma$ is integrable for all $f\in \mathcal O(X)$. 
\end{lemma}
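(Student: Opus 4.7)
The plan is to verify directly that the Schouten--Nijenhuis bracket $[f\sigma, f\sigma]$ vanishes for every $f \in \mathcal{O}(X)$. Working in local coordinates $(x_1, x_2, x_3)$ on $X$ and writing $\sigma = \sum_{i<j} \sigma^{ij}\partial_i \wedge \partial_j$, I would expand
\[
[f\sigma, f\sigma]^{ijk} = 2\sum_l (f\sigma^{il})\,\partial_l(f\sigma^{jk}) + \text{cyclic in } (i,j,k),
\]
and apply the Leibniz rule to $\partial_l(f\sigma^{jk})$. Separating the $(\partial_l f)\sigma^{jk}$ term from the $f\,\partial_l\sigma^{jk}$ term cleanly splits the bracket as
\[
[f\sigma, f\sigma] \;=\; f^2\,[\sigma,\sigma] \;+\; 2f\cdot X_f \wedge \sigma,
\]
where $X_f := \iota_{df}\sigma$ is the Hamiltonian vector field of $f$, i.e.\ the image of $df$ under the anchor $\sigma^\sharp \colon T^*X \to TX$.

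The first summand vanishes because $\sigma$ is Poisson, so it remains to show that $X_f \wedge \sigma = 0$ globally. This is the step that uses $\dim X = 3$. At a point $x \in X$ where $\sigma(x) \neq 0$, the bivector $\sigma(x)$ has rank $2$ and its image under $\sigma^\sharp_x$ is a $2$-plane $\Pi \subset T_xX$. Since $X_f(x) = \sigma^\sharp_x(df_x) \in \Pi$ and $\sigma(x) \in \Lambda^2\Pi$, we get
\[
X_f(x) \wedge \sigma(x) \in \Lambda^3 \Pi = 0,
\]
because $\dim \Pi = 2$. At points where $\sigma$ vanishes the wedge is manifestly zero. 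Hence $X_f \wedge \sigma \equiv 0$ and consequently $[f\sigma, f\sigma] = 0$, which says that $f\sigma$ is integrable.

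The only real obstacle is the bookkeeping of signs and combinatorial factors needed to justify the identity $[f\sigma, f\sigma] = f^2[\sigma,\sigma] + 2f\,X_f \wedge \sigma$; once that decomposition is in place, the dimension count finishes the argument in one line. The name ``cone'' reflects the geometric content: the set of integrable rescalings $\{f\sigma : f \in \mathcal{O}(X)\}$ is automatically an $\mathcal{O}(X)$-cone of Poisson structures in dimension three, which is exactly what is needed in the sequel to parametrize all Poisson structures on $W_k$ by their generators over global functions.
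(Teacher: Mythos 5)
Your argument is correct. Note that the paper itself gives no proof of Lemma~\ref{cone} --- it is quoted from \cite[Prop.\thinspace 1]{BGKS} --- so there is nothing internal to compare against; but your route is the standard one and, in essence, the one used in that reference: the Leibniz/derivation property of the Schouten bracket gives $[f\sigma,f\sigma]=f^2[\sigma,\sigma]+2f\,(\iota_{df}\sigma)\wedge\sigma$ (up to a sign convention that is immaterial here), and the second term dies because on a threefold a bivector has pointwise rank $0$ or $2$, hence is decomposable, so $\sigma(x)\in\Wedge^2\Pi$ and $X_f(x)\in\Pi$ force $X_f(x)\wedge\sigma(x)\in\Wedge^3\Pi=0$. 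The two points worth making explicit in a write-up are exactly the ones you isolate: the sign bookkeeping in the Leibniz expansion (harmless, since the offending term is shown to vanish identically), and the rank-$\le 2$/decomposability fact, which is where $\dim X=3$ enters and which fails already for rank-$4$ bivectors on higher-dimensional $X$.
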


From now on we will focus on the local Calabi--Yau threefolds $W_k$ defined as follows. 
For $k \geq 1$, 
\[
W_k = \Tot (\mathcal{O}_{\mathbb{P}^1}(-k) \oplus \mathcal{O}_{\mathbb{P}^1}(k-2)).
\]
The {\it canonical charts} for the complex manifold structure of $W_k$  is obtained by gluing the open sets 
$$U = \mathbb{C}^3_{\{z,u_1,u_2\}}  \quad \mbox{and} \quad  V = \mathbb{C}^3_{\{\xi,v_1,v_2\}}$$ 
by the relation
$$(\xi, v_1, v_2) = (z^{-1}, z^k u_1, z^{-k+2}u_2).$$

If $\sigma$ is a Poisson structure on $W_k$, we call $\mathcal W_k (\sigma) = (W_k, \mathcal A^\sigma)$ a {\it noncommutative deformation} of $W_k$,
and a {\it vector bundle} on a noncommutative deformation is by definition a locally free sheaf of $\mathcal A^\sigma$-modules. 
These vector bundles and their moduli are our objects of study here.

The cohomological hypothesis \eqref{coh} is verified in the cases of $W_1$ and $W_2$ (but not for $W_3$, see App.\thinspace\ref{H1-Wk-O}).
We now recall the basic properties of Poisson structures on $W_k$ for $k=1,2.$ 
All Poisson structures on $W_k$ may be described  by giving their generators over global functions.

All Poisson structures on $W_1$ can be obtained using the following generators
\cite[Thm.\thinspace 3.2]{BGKS}
$$
\sigma_1= \partial z \wedge \partial u_1, \quad
\sigma_2= \partial z \wedge \partial u_2,
$$
$$
\sigma_3= u_1 \partial u_1 \wedge \partial u_2
- z \partial z \wedge \partial u_2, \quad
\sigma_4= u_2 \partial u_1 \wedge \partial u_2
+z \partial z \wedge \partial u_1.
$$
The $W_1$-Poisson structures  $\sigma_1,\sigma_2, \sigma_3,\sigma_4$   are pairwise isomorphic.

All Poisson structures on $W_2$ can be obtained using the following generators \cite[Lem.\thinspace3]{BGKS}
$$
\sigma_1 = \partial z \wedge \partial u_1, \quad 
\sigma_2= \partial z \wedge \partial u_2, \quad
\sigma_3 = z \partial z \wedge \partial u_2, $$
$$
\sigma_4 = u_1 \partial u_1 \wedge \partial u_2, \quad
\sigma_5 = 2zu_1 \partial u_1 \wedge \partial u_2
	- z^2 \partial z \wedge \partial u_2.
$$
 The $W_2$-Poisson structures $ \sigma_2$ and $\sigma_5$   are isomorphic.
Moreover, the Poisson structures  $\sigma_1,\sigma_2, \sigma_3,\sigma_4$ on $W_2$  are pairwise inequivalent, 
giving 4 distinct Poisson manifolds.

\section{Vector  bundles on noncommutative deformations}

To discuss moduli of vector bundles on noncommutative deformations of $W_k$, for $k=1$ or $2$
we will consider those bundles that are formally algebraic. 
We first present some generalities.

\begin{definition}\label{formally-algebraic}
We say that ${\bf p} = \sum p_n \hbar^n \in \mathcal O_X {[[\hbar]]}$ is formally algebraic if $p_n$ is a polynomial for every $n$.
We say that a vector bundle is {\it formally algebraic} if it is isomorphic to a vector bundle given by formally algebraic transition functions. In addition, if there exists $N$ such that $p_n=0$ for all $n>N$, we then say that ${\bf p}$ is {\it algebraic}.
\end{definition}

\begin{lemma}
\label{acyclic}
Let $\mathcal A$ be a deformation quantization of $\mathcal O_X$. Then an $\mathcal A$-module $\mathcal S$ is acyclic if and only if $S = \mathcal S / \hbar \mathcal S$  is acyclic.
\end{lemma}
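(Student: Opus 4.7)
The plan is to compare the cohomology of $\mathcal S$ and of $S$ through the short exact sequence induced by multiplication by $\hbar$. Since $\hbar$ acts as a scalar under $\star_\sigma$, it is central in $\mathcal A$, and since the vector bundles we consider are locally free over $\mathcal A^\sigma$, the sheaf $\mathcal S$ is $\hbar$-torsion free. Consequently
$$0 \longrightarrow \mathcal S \xrightarrow{\;\cdot\,\hbar\;} \mathcal S \longrightarrow S \longrightarrow 0$$
is a short exact sequence of sheaves of $\mathcal A$-modules, and in particular of sheaves of abelian groups on $W_k$.

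For the forward direction, I would take the associated long exact sequence in sheaf cohomology,
$$\cdots \to H^i(\mathcal S) \xrightarrow{\cdot\,\hbar} H^i(\mathcal S) \to H^i(S) \to H^{i+1}(\mathcal S) \to \cdots$$
which immediately forces $H^i(S) = 0$ for $i \geq 1$ whenever $\mathcal S$ is acyclic.

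For the converse, I would argue by induction on the $\hbar$-adic order. Assuming $S$ is acyclic, the short exact sequence
$$0 \longrightarrow S \xrightarrow{\cdot\,\hbar^{n}} \mathcal S/\hbar^{n+1}\mathcal S \longrightarrow \mathcal S/\hbar^{n}\mathcal S \longrightarrow 0,$$
whose leftmost map is well-defined thanks to $\hbar$-torsion freeness, combined with the inductive hypothesis, shows that every truncation $\mathcal S \otimes_{\mathcal A} \mathcal A^{(n)} = \mathcal S/\hbar^{n+1}\mathcal S$ is acyclic. One then passes to the formal limit $\mathcal S = \varprojlim_n \mathcal S/\hbar^{n+1}\mathcal S$ and invokes commutation of cohomology with inverse limits: the transition maps in the tower are clearly surjective, so the Mittag--Leffler condition holds, $R^1\varprojlim$ on the cohomology groups vanishes, and one concludes
$$H^i(\mathcal S) \;=\; \varprojlim_n H^i(\mathcal S/\hbar^{n+1}\mathcal S) \;=\; 0 \qquad \text{for } i \geq 1.$$

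The main technical obstacle is justifying this last step rigorously. On the quasi-projective threefolds $W_k$ the cohomological dimension is finite (bounded by $3$), so Grothendieck's theorem on the exchange of cohomology with inverse limits of quasi-coherent sheaves satisfying Mittag--Leffler applies, and the remainder of the argument reduces to formal homological algebra on the long exact sequences displayed above.
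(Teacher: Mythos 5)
Your proof is correct, but for the substantive direction it takes a genuinely different route from the paper. The easy implication ($\mathcal S$ acyclic $\Rightarrow$ $S$ acyclic) is the same in both. For the converse, the paper reads off from the long exact sequence of $0 \to \mathcal S \xrightarrow{\hbar} \mathcal S \to S \to 0$ that multiplication by $\hbar$ is surjective on $H^j(X,\mathcal S)$ for $j>0$ and concludes vanishing in one line; you instead climb the tower of truncations $\mathcal S/\hbar^{n+1}\mathcal S$ by induction and pass to the inverse limit via Mittag--Leffler. The paper's argument is slicker but tacitly uses that a $\mathbb C[[\hbar]]$-module on which $\hbar$ acts surjectively must vanish, which fails without $\hbar$-adic separatedness (e.g.\ $\mathbb C((\hbar))/\mathbb C[[\hbar]]$); supplying that separatedness for $H^j(X,\mathcal S)$ is essentially your inverse-limit computation, so your version makes explicit what the paper leaves implicit. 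The price you pay is needing $\mathcal S \simeq \varprojlim_n \mathcal S/\hbar^{n+1}\mathcal S$ ($\hbar$-adic completeness) and the exchange of cohomology with the limit; both hold for the locally free $\mathcal A$-modules actually in play, though the truncations are only filtered by coherent $\mathcal O$-modules rather than quasi-coherent themselves, so the Mittag--Leffler condition should be checked on sections over an affine (or Stein) basis --- where it follows from $H^1(U,S)=0$ --- rather than quoted verbatim for quasi-coherent sheaves. Your explicit observation that $\hbar$-torsion-freeness is needed for exactness of the multiplication-by-$\hbar$ sequence is a hypothesis the paper also uses, silently.
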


\begin{proof}
Consider the short exact sequence 
\[
0 \longrightarrow \mathcal S \stackrel{\hbar}\longrightarrow \mathcal S \longrightarrow S \longrightarrow 0.
\]
It gives, for $j > 0$ surjections 
\[
H^j (X, \mathcal S) \stackrel{\hbar}\longrightarrow H^j (X, \mathcal S) \longrightarrow 0.
\]
This immediately implies that $H^j (X, \mathcal S) = 0$ for $j > 0$. The converse is immediate.
\end{proof}

\begin{notation}
Let $\mathcal{W}_k$ be a noncommutative deformation of $W_k$. Denote by $\mathcal{A}(j)$ the line bundle over 
$\mathcal {W}_k$ with transition function $z^{-j}$, hence the pullback of $\mathcal O_{\mathbb P^1}(j)$.
\end{notation}

\begin{proposition}\label{lineb}
For $k=1,2$ any line bundle on $\mathcal{W}_k$ is isomorphic to $\mathcal{A}(j)$ for some $j \in \mathbb{Z}$, i.e., $\Pic(\mathcal W_k) = \mathbb{Z}$ when $k=1,2$.
\end{proposition}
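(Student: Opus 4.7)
The plan is to classify line bundles on $\mathcal{W}_k$ by a Čech-cohomology argument that is stratified by the powers of $\hbar$ and ultimately reduces the problem to the classical computation $\mathrm{Pic}(W_k) = \mathbb{Z}$.

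First I would show that every line bundle $\mathcal{L}$ on $\mathcal{W}_k$ is trivial on each of the canonical charts $U, V \simeq \mathbb{C}^3$. Since these charts are affine, $H^1(U,\mathcal{O}) = H^1(V,\mathcal{O}) = 0$, and Lemma~\ref{acyclic} transports this vanishing to the quantized structure sheaf $\mathcal{A}^\sigma$. A standard lifting argument then trivializes $\mathcal{L}|_U$ and $\mathcal{L}|_V$, so $\mathcal{L}$ is determined by a single star-invertible transition function $\phi \in \Gamma(U \cap V, \mathcal{A}^\sigma)^\times$ modulo the gauge equivalence $\phi \sim \psi_V \star \phi \star \psi_U^{\star -1}$ with $\psi_U, \psi_V$ star-units on the respective charts.

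Next I would stratify by the order in $\hbar$. Writing $\mathcal{A}^{(n)} = \mathcal{O}\llrr{\hbar}/\hbar^{n+1}$, there is a short exact sequence of sheaves of (multiplicative) groups
$$1 \longrightarrow \mathcal{O} \longrightarrow (\mathcal{A}^{(n+1)})^\times \longrightarrow (\mathcal{A}^{(n)})^\times \longrightarrow 1,$$
where the left map sends $f \mapsto 1 + \hbar^{n+1} f$ and the kernel is identified with the additive sheaf $\mathcal{O}$ because the star product contributes nothing beyond ordinary multiplication at the top order. The associated long exact sequence
$$H^1(W_k,\mathcal{O}) \to H^1(W_k,(\mathcal{A}^{(n+1)})^\times) \to H^1(W_k,(\mathcal{A}^{(n)})^\times) \to H^2(W_k,\mathcal{O}),$$
combined with the vanishing $H^i(W_k,\mathcal{O}) = 0$ for $i = 1, 2$ and $k = 1, 2$ (recalled in App.~\ref{H1-Wk-O}), makes the central map an isomorphism. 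Starting from $H^1(W_k, \mathcal{O}^\times) = \mathrm{Pic}(W_k) = \mathbb{Z}$, induction yields $H^1(W_k,(\mathcal{A}^{(n)})^\times) = \mathbb{Z}$ for every $n$. Because the inverse system has isomorphisms between successive terms, Mittag--Leffler is trivially satisfied and one passes to the $\hbar$-adic limit; the $\hbar$-independent function $z^{-j}$ survives as a valid star-transition function, identifying the $j$-th generator with $\mathcal{A}(j)$ and distinguishing different $j$'s already at the classical level.

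The delicate point in this scheme is the identification of the kernel sheaf in the above extension with the additive sheaf $\mathcal{O}$: one must check that $(1 + \hbar^{n+1} f) \star (1 + \hbar^{n+1} g) \equiv 1 + \hbar^{n+1}(f+g) \pmod{\hbar^{n+2}}$, which relies on the fact that every bidifferential operator $B_n$ in the star-product expansion is weighted by at least one factor of $\hbar$. Once this observation is in place, the remainder is standard noncommutative deformation theory, made possible by the two cohomological vanishings of $\mathcal{O}$ on $W_k$.
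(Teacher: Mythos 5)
Your argument is correct and runs on the same engine as the paper's proof --- work order by order in $\hbar$ and use $H^1(W_k,\mathcal O)=0$ to absorb each correction --- but it is packaged genuinely differently. The paper never leaves the level of a single transition function on $U\cap V$: after normalising the classical part to $z^{-j}$ it writes the isomorphism condition as an explicit recursion $S_n+z^{-j}a_n+z^{-j}\alpha_n=0$ and solves it by splitting $z^jS_n$ into $U$- and $V$-holomorphic parts, which is precisely the \v{C}ech incarnation of $H^1(W_k,\mathcal O)=0$ for the two-chart cover; the gauge transformation is then assembled as an $\hbar$-adically convergent product, so no limit argument and no $H^2$ ever appear. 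You instead run the standard deformation-theoretic ladder of central extensions $1\to\mathcal O\to(\mathcal A^{(n+1)})^\times\to(\mathcal A^{(n)})^\times\to 1$ and pass to the inverse limit. This buys more structure and makes explicit two points the paper suppresses (triviality of $\mathcal L$ on each affine chart, and the identification of the kernel with the additive sheaf $\mathcal O$ --- for which you also need centrality of $1+\hbar^{n+1}\mathcal O$ modulo $\hbar^{n+2}$ so that the obstruction map to $H^2$ exists, together with the normalisation $B_m(1,\cdot)=B_m(\cdot,1)=0$). The cost is two extra inputs: the vanishing $H^2(W_k,\mathcal O)=0$, which is true (the two-chart affine cover is Leray, so \v{C}ech cohomology vanishes in degrees $\ge 2$) but is asserted only in Section~\ref{ncsec} and not proved in App.~\ref{H1-Wk-O}, which treats only $H^1$; and the Mittag--Leffler step, which properly concerns the $H^0$ of the unit sheaves (compatibility of the chosen isomorphisms across truncations) rather than the $H^1$'s themselves, though it does hold here. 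Note also that for the classification statement you only need injectivity of $H^1((\mathcal A^{(n+1)})^\times)\to H^1((\mathcal A^{(n)})^\times)$, hence only $H^1(\mathcal O)=0$, since $\mathcal L$ is given on all of $\mathcal A$ and lifting is never in question.
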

\begin{proof}
Let $f = f_0 + \sum_{n=1}^\infty \widetilde f_n \,\hbar^n \in \mathcal A^* (U \cap V)$ be the transition function for 
the line bundle $\mathcal L$. Then there exist functions $a_0 \in \mathcal O^* (U)$ and $\alpha_0 \in \mathcal O^* (V)$ such that $\alpha_0 f_0 a_0 = z^{-j}$ and viewing $a_0$ resp.\ $\alpha_0$ as elements in $\mathcal A^* (U)$ resp.\ $\mathcal A^* (V)$ one has $\alpha_0 \star f \star a_0 = z^{-j} + \sum_{n=1}^\infty f_n \hbar^n$ for some $f_n \in \mathcal O (U \cap V)$. We may thus assume that the transition function of $\mathcal L$ is 
$z^{-j} + \sum_{n=1}^\infty f_n \hbar^n.$

To give an isomorphism $\mathcal L \simeq \mathcal A (j)$ it suffices to define functions $a_n \in \mathcal O (U)$ and $\alpha_n \in \mathcal O (V)$ satisfying
\begin{align}
\label{linebundleisomorphism}
\big( 1 + \textstyle\sum_{n=1}^\infty \alpha_n \hbar^n \big) \star \big( z^{-j} + \sum_{n=1}^\infty f_n \hbar^n \big) \star \big( 1 + \sum_{n=1}^\infty a_n \hbar^n \big) = z^{-j}.
\end{align}

Collecting terms by powers of $\hbar$, (\ref{linebundleisomorphism}) is equivalent to the system of equations
\begin{align}
S_n + z^{-j} a_n + z^{-j} \alpha_n = 0 \tag*{$n = 1, 2, \dotsc$}
\end{align}
where $S_n$ is a finite sum involving $f_i$, $B_i$ for $i \leq n$, but only $a_i, \alpha_i$ for $i < n$. The first terms are
\begin{align*}
S_1 &= f_1 \\
S_2 &= f_2 + \alpha_1 f_1 + a_1 f_1 + B_1 \big( \alpha_1, z^{-j} \big) + B_1 \big( z^{-j}, a_1 \big) + \alpha_1 z^{-j} a_1 \\
S_3 &= f_3
+ B_2 \big( \alpha_1, z^{-j} \big)
+ B_2 \big( z^{-j}, a_1 \big)
+ B_1 \big( \alpha_2, z^{-j} \big) \\ &\qquad
+ B_1 \big( z^{-j}, a_2 \big)
+ B_1 \big( \alpha_1, f_1 \big)
+ B_1 \big( \alpha_1, z^{-j} a_1 \big)
+ B_1 \big( z^{-j}, a_1 \big) \\ &\qquad
+ \alpha_2 f_1 + \alpha_2 z^{-j} a_1 + \alpha_1 f_2 + \alpha_1 f_1 a_1 + \alpha_1 z^{-j} a_2 + f_2 a_1 + f_1 a_2
\end{align*}
Since by Lem.~\ref{van}  we have $H^1 (W_k, \mathcal O_{W_k}) = 0$ when $k=1,2$, we can solve these equations recursively, by defining $a_n$ to cancel out all terms of $z^j S_n$ having positive powers of $z$ and setting $\alpha_n = z^j S_n - a_n$.
\end{proof}

Note that this is essentially the same proof as \cite[Prop.\thinspace 6.7]{BG},
and it does not work for $k\geq 3$; in fact,  $\Pic(W_3)$ is much larger,
see Lem.\thinspace\ref{muchlarger}.

We now consider vector bundles of higher rank.

\begin{theorem}
\label{ncfiltrable}
 For $k=1,2$, vector bundles over $\mathcal W_k (\sigma)$ are filtrable.
\end{theorem}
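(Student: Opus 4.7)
My plan is to argue by induction on the rank $r$ of $\mathcal E$. The base case $r = 1$ is Proposition \ref{lineb}. For $r > 1$, I aim to construct a short exact sequence of $\mathcal A$-modules
\[
0 \longrightarrow \mathcal L \longrightarrow \mathcal E \longrightarrow \mathcal Q \longrightarrow 0
\]
with $\mathcal L$ a line bundle and $\mathcal Q$ locally free of rank $r-1$; the induction hypothesis then yields filtrability of $\mathcal Q$, and hence of $\mathcal E$.

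To construct $\mathcal L$, I would pass to the classical limit $E := \mathcal E/\hbar\mathcal E$, a rank-$r$ vector bundle on $W_k$. Invoking the classical filtrability of vector bundles on $W_k$ for $k = 1, 2$, one obtains a line subbundle $L \hookrightarrow E$ with locally free quotient $Q$; by Proposition \ref{lineb} we may write $L = \mathcal O(j)$ for some $j \in \mathbb Z$. The heart of the argument is then to lift the classical inclusion $\iota_0\colon \mathcal O(j) \hookrightarrow E$ to an $\mathcal A$-linear injection $\iota \colon \mathcal A(j) \hookrightarrow \mathcal E$ with $\iota \bmod \hbar = \iota_0$.

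Following the order-by-order strategy of Proposition \ref{lineb}, I would construct $\iota = \iota_0 + \sum_{n \geq 1} \iota_n \hbar^n$ recursively. Since the canonical charts $U, V \cong \mathbb C^3$ are affine, local lifts exist at each order on each chart; the obstruction to gluing these local lifts into a global $\mathcal A$-linear morphism is a Cech $1$-cocycle whose class lies in $H^1(W_k, E(-j))$. To kill this obstruction I would choose $L$ as a maximal destabilising sub-line-bundle of $E$, so that $E(-j)$ has only non-negative splitting components along the zero section $\mathbb P^1$, and then verify $H^1(W_k, E(-j)) = 0$ via the projection formula along $\pi \colon W_k \to \mathbb P^1$, in the same spirit as the cohomology computations used in Proposition \ref{lineb}. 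Local freeness of $\mathcal Q = \mathcal E/\mathcal L$ then follows since $\mathcal A$-modules over each affine chart $U$ or $V$ are free, so the inclusion $\iota$ splits locally and the quotient is free of rank $r-1$.

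The main obstacle is verifying that the obstruction cohomology $H^1(W_k, E(-j))$ vanishes at every order; for $k = 1, 2$ this group is computable via $\pi_* \mathcal O_{W_k}$, but its vanishing is sensitive to the splitting type of $E|_{\mathbb P^1}$, so the initial choice of $L$ must be made with care, and we may need to iterate the argument if an a priori choice does not suffice. A secondary subtlety is to confirm that the globally constructed $\iota$ is injective as an $\mathcal A$-linear map (not merely injective mod $\hbar$), which follows from the $\hbar$-adic separation of $\mathcal A$ together with Nakayama's lemma.
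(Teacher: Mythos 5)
Your overall strategy --- lift the classical filtration of $E=\mathcal E/\hbar\mathcal E$ order by order in $\hbar$, with the obstruction at each order living in $H^1$ of a Hom sheaf --- is reasonable, but the step that is supposed to kill the obstruction is wrong as stated. If $L=\mathcal O(j)$ is the \emph{maximal destabilising} sub-line-bundle, i.e.\ $j$ is as large as possible, then the splitting components of $E\otimes L^{\vee}=E(-j)$ along $\ell$ are all $\leq 0$, not $\geq 0$: for a rank-two bundle of splitting type $j\geq 1$ one gets $E(-j)|_\ell\simeq\mathcal O\oplus\mathcal O(-2j)$, and by the projection formula $H^1(W_1,E(-j))$ receives contributions $H^1(\ell,\mathcal O(n-2j))^{\oplus(n+1)}$ from the $n$-th formal neighbourhood, which are nonzero for $n<2j-1$. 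Already for the split bundle $E=\mathcal O(j)\oplus\mathcal O(-j)$ one has $H^1(W_1,\mathcal O(-2j))\neq 0$, so the vanishing your argument requires fails for the proposed choice of $L$. The choice that actually makes the twist non-negative is the opposite one (the sub-line-bundle of most negative degree, $\mathcal O(-j)$ in the canonical form, for which $E(j)|_\ell=\mathcal O(2j)\oplus\mathcal O$ and the degrees stay non-negative on every formal neighbourhood precisely because $N^*=\mathcal O(1)^{\oplus 2}$ for $W_1$ and $\mathcal O(2)\oplus\mathcal O$ for $W_2$); but then you must show that this particular classical sub-line-bundle exists with locally free quotient in arbitrary rank, which is exactly the content of the classical filtrability theorem and is not delivered by a Harder--Narasimhan-type choice. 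Your hedge that the argument ``may need to be iterated'' does not close this gap.

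The paper sidesteps the issue by not re-running the induction at all: it notes that the classical proof of filtrability in \cite{K} uses only vanishings of $H^i(W_k,\cdot)$, $i=1,2$, for sheaves built from $\mathrm{Sym}^n N^*$ (this is where the restriction $k\leq 2$ enters, since $N^*$ has non-negative degree exactly then), and Lemma~\ref{acyclic} transfers each such vanishing from the classical limit to the corresponding $\mathcal A$-module, so the classical argument runs verbatim over $\mathcal A$. If you want to keep your order-by-order lifting, you should likewise organise the obstruction by formal neighbourhoods of $\ell$ and exploit the non-negativity of $N^*$, rather than a destabilising subsheaf.
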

\begin{proof}
This is a generalisation of Ballico--Gasparim--K\"oppe \cite[Thm.~3.2]{ballicogasparimkoppe1} to the noncommutative case. Let  
$\mathcal E$ be a sheaf of $\mathcal A$-modules. 
Lem.~\ref{acyclic} gives that the classical limit
$\mathcal E_0 = \mathcal E / \hbar \mathcal E$ is acyclic as a sheaf of $\mathcal A$-modules (and equivalently as a sheaf of $\mathcal O$-modules) if and only if $\mathcal E$ is acyclic as a sheaf of $\mathcal A$-modules. 
 
Filtrability for a bundle $E$ over $W_k$, for $k=1,2$ was  proved in \cite{K} and 
 is obtained from the vanishing of cohomology groups $H^i (W_k, E \otimes {\mbox{Sym}^n} N^*)$ for $i=1,2$, where $N^*$ is the conormal bundle of $\ell \subset W_k$ and $n>0$ are integers; the proof proceeds by induction on $n$. In the noncommutative case, let $\mathcal S$ denote the kernel of the projection $\mathcal A^{(n)} \to \mathcal A^{(n-1)}$. By construction we have that $\mathcal S / \hbar \mathcal S = {\mbox{Sym}^n} N^*$ and the required vanishing of cohomologies is guaranteed by Lem.\thinspace\ref{acyclic}. \end{proof}

The analogous proof does not work for $W_3$, see \cite[Rem.\thinspace 3.13]{K}. It is unknown whether 
bundles on $W_k$ are filtrable when $k\geq 3$.

\begin{remark}
There are also some particular features happening only when $k=1$. 
Every holomorphic vector bundle on $W_1$ is  algebraic  \cite[Thm.\thinspace 3.10]{K}, and 
$W_1$ is formally rigid \cite[Thm.\thinspace 11]{GKRS}.
In contrast, if $k>1$, then $W_k$ has  as infinite-dimensional family of deformations. 
In particular, a deformation family for $W_2$ can be given by
$
(\xi, v_1, v_2) = \left(z^{-1}, z^2u_1 + z \sum_{j>0} t_ju_2^j, u_2\right)
$ 
\cite[Thm.\thinspace 13]{GKRS} and this family 
contains infinitely many distinct manifolds \cite[Thm.\thinspace 1.13]{BGS}. 
Furthermore, for $k > q > 0$, $W_k$ can be deformed to $W_q$ \cite[Thm.\thinspace 1.28]{BGS}.  
\end{remark}

For each  Poisson  manifold $(W_k, \sigma)$, we want to study moduli spaces of vector bundles over $(\mathcal W_k, \sigma_{\star})$
where $\sigma_{\star}$ is the corresponding star product.

\cite[Prop.\thinspace 3.1]{K} showed that a rank $2$ bundle $E$ on $W_k$ 
with first Chern class $c_1 (E) = 0$  is determined by a  canonical transition matrix
$	\begin{pmatrix}
		z^j & p \\
		0  & z^{-j}
	\end{pmatrix}$
where, using  $\epsilon = 0,1$ we have: 
\begin{equation}\label{poli1}p=\sum_{s=\epsilon}^{2j-2}\sum_{i=1-\epsilon}^{2j-2-s}\sum_{l=i+s-j+1}^{j-1}p_{lis}z^lu_1^iu_2^s\quad \text{for} \quad k=1,\end{equation} and
\begin{equation}\label{poli2}p=\sum_{s=\epsilon}^{\infty}\sum_{i=1-\epsilon}^{j-1}\sum_{l=2i-j+1}^{j-1}p_{lis}z^lu_1^iu_2^s\quad \text{for} \quad  k=2.\end{equation}

Accordingly, for a noncommutative deformation $(\mathcal W_k,\sigma)$ we define the 
notion of {\it canonical  transition matrix} as:
\begin{equation}\label{ncpoly}T =
\begin{pmatrix}
z^j & {\bf p} \\
 0  & z^{-j}
\end{pmatrix}
\qquad \mbox{with} \qquad
 {\bf p} = \sum_{n=0}^\infty p_n \hbar^n 
 \in \Ext^1 (\mathcal A (j), \mathcal A (-j)).
\end{equation}

Where we have that each $p_n$ can be given the same canonical form of the classical case, as shown by the next result, which is valid for $k=1$ or $2$. 

\begin{lemma}
\label{subspace}
Let $\mathcal A$ be a deformation quantization of $\mathcal{O} \ce \mathcal O_{W_k}$. There is an injective map of $\mathbb C$-vector spaces
\begin{align*}
\begin{tikzpicture}[baseline=-2.6pt,description/.style={fill=white,inner sep=2pt}]
\matrix (m) [matrix of math nodes, row sep=1em, text height=1.5ex, column sep=1.5em, text depth=0.25ex, ampersand replacement=\&, column 2/.style={anchor=base west}]
{
\Ext^1_{\mathcal A} (\mathcal A (j), \mathcal A (-j)) \& \displaystyle\prod_{n=0}^\infty \Ext^1_{\mathcal O} (\mathcal O (j), \mathcal O (-j)) \hbar^n \simeq \Ext^1_{\mathcal O} (\mathcal O (j), \mathcal O (-j)) \llrr{\hbar} \\
 {\bf p} = p_0 + \displaystyle\sum_{n=1}^\infty p_n \hbar^n \& (p_0, p_1 \hbar, p_2 \hbar^2, \dotsc) \\
}
;
\path[|-stealth,line width=.5pt,font=\scriptsize]
(m-2-1) edge (m-2-2)
;
\path[-stealth,line width=.5pt,font=\scriptsize]
(m-1-1) edge (m-1-2)
;
\end{tikzpicture}
\end{align*}
where $p_i \in \Ext^1 (\mathcal O (j), \mathcal O (-j))$.
\end{lemma}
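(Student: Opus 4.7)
The plan is to describe $\Ext^1_{\mathcal A}(\mathcal A(j),\mathcal A(-j))$ via Čech cohomology on the two-chart cover $\{U,V\}$ of $W_k$, and then reduce the injectivity claim to the uniqueness of a canonical representative built order by order in $\hbar$.

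First I would identify $\Ext^1_{\mathcal A}(\mathcal A(j),\mathcal A(-j))$ with the Čech group $H^1(\{U,V\},\mathcal A(-2j))$: by Lemma~\ref{acyclic} the charts $U$ and $V$ remain acyclic for every $\mathcal A$-module whose classical limit is acyclic, and $\mathcal Hom_{\mathcal A}(\mathcal A(j),\mathcal A(-j))\cong \mathcal A(-2j)$. A class is therefore represented by a formal series ${\bf p}=\sum_{n\ge 0}p_n\hbar^n$ with $p_n\in\mathcal O(-2j)(U\cap V)$, modulo the gauge relation ${\bf p}\sim{\bf p}+z^{j}\star a+\alpha\star z^{-j}$ for $a\in\mathcal A(U)$, $\alpha\in\mathcal A(V)$. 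The classical counterpart gives $\Ext^1_{\mathcal O}(\mathcal O(j),\mathcal O(-j))$ with a unique canonical polynomial representative of the shape (\ref{poli1}) or (\ref{poli2}), as in \cite[Prop.\thinspace 3.1]{K}.

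Next I would construct the map by producing inductively a canonical representative of ${\bf p}$ whose every coefficient is a classical canonical polynomial. At order zero one chooses $(a_0,\alpha_0)\in\mathcal O(U)\times\mathcal O(V)$ bringing $p_0$ to its classical canonical form. For the inductive step, expanding $z^{j}\star a+\alpha\star z^{-j}$ in powers of $\hbar$, its coefficient at $\hbar^n$ equals
$$
z^{j}a_n+\alpha_n z^{-j}+R_n, \qquad R_n=\sum_{k=1}^{n}\bigl(B_k(z^{j},a_{n-k})+B_k(\alpha_{n-k},z^{-j})\bigr),
$$
so $R_n$ depends only on $a_0,\dotsc,a_{n-1}$, $\alpha_0,\dotsc,\alpha_{n-1}$ and is independent of $(a_n,\alpha_n)$. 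Assuming inductively that $p_0,\dotsc,p_{n-1}$ have already been canonicalized, the task of canonicalizing the updated coefficient $p_n+R_n$ by a suitable choice of $(a_n,\alpha_n)$ is a purely classical coboundary problem, solvable by the classical result, and it leaves all previously fixed orders untouched. Sending $[{\bf p}]$ to the tuple of classes of the coefficients of this canonical representative defines the map of the lemma; $\mathbb C$-linearity is immediate since the canonical form respects $\mathbb C$-linear combinations.

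Injectivity then follows at once: if every coefficient class of the image vanishes in $\Ext^1_{\mathcal O}(\mathcal O(j),\mathcal O(-j))$, the classical uniqueness of canonical polynomials forces each $p_n=0$, so the chosen representative of ${\bf p}$ vanishes and the $\mathcal A$-extension is trivial. The main obstacle will be to verify cleanly that the gauge corrections introduced at stage $n$ never disturb the canonical shape already achieved at lower orders; this is what the explicit form of $R_n$ guarantees, since $R_n$ depends only on data strictly below order $n$, allowing the construction to proceed without obstruction and to converge in the $\hbar$-adic topology.
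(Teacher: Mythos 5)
Your overall route---identify $\Ext^1_{\mathcal A}(\mathcal A(j),\mathcal A(-j))$ with the \v Cech group of the cover $\{U,V\}$ and canonicalize a representative order by order in $\hbar$---is the natural expansion of the paper's one-sentence proof, which simply asserts the corresponding quotient description of $\Ext^1_{\mathcal A}$ inside $\Ext^1_{\mathcal O}(\mathcal O(j),\mathcal O(-j))\llrr{\hbar}$. The gap is at the point you wave off in your last paragraph: the real obstacle is not whether the recursion can proceed (it can, exactly because $R_n$ only involves lower-order data), but whether its output is independent of the choices made, and without that you have not defined a map on $\Ext^1_{\mathcal A}$ at all, let alone an injective linear one. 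The pair $(a_n,\alpha_n)$ that canonicalizes $p_n+R_n$ is unique only up to a pair with $z^ja_n=\alpha_nz^{-j}$ on $U\cap V$, i.e.\ up to a global section of $\mathcal Hom_{\mathcal O}(\mathcal O(j),\mathcal O(-j))\cong\mathcal O(-2j)$, and $H^0(W_k,\mathcal O(-2j))\neq 0$ (on $W_1$ take $a_n=u_1^{2j}$, $\alpha_n=v_1^{2j}$). These ambiguities feed into every higher $R_m$ through the terms $B_{m-n}(z^j,a_n)-B_{m-n}(\alpha_n,z^{-j})$, which need not be coboundaries, so two runs of your algorithm may produce different tuples. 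That this danger is real is shown by the cochain $z^2\star u_1^2=z^2u_1^2+4zu_1\,\hbar+\cdots$ on $\mathcal W_1$ with $\sigma_1=\partial z\wedge\partial u_1$ and $j=2$: it is an $\mathcal A$-coboundary (take $a=u_1^2$, $\alpha=0$), its $\hbar^0$-coefficient is a classical coboundary, yet its $\hbar^1$-coefficient $4zu_1$ represents a nonzero class in $\Ext^1_{\mathcal O}(\mathcal O(2),\mathcal O(-2))$. Hence the coefficientwise map on arbitrary representatives is genuinely ill defined, and everything rests on the uniqueness of the fully canonical representative, which you assert but do not prove.

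What is actually needed is that every global section of $\mathcal O(-2j)$ lifts to a global section of $\mathcal A(-2j)$; equivalently, that the connecting homomorphisms $H^0(W_k,\mathcal O(-2j))\to H^1(W_k,\mathcal O(-2j))$ coming from the extensions $0\to\hbar^{n}\mathcal A^{(n)}(-2j)\to\mathcal A^{(n)}(-2j)\to\mathcal A^{(n-1)}(-2j)\to 0$ all vanish. Since $H^1(W_k,\mathcal O(-2j))\neq 0$ for $j\geq 2$, this is not automatic. At first order it does hold: if $\alpha_0=z^{2j}a_0$ then for every Poisson structure on $W_k$ one computes $\{z^{-j},a_0\}=-z^{-2j}\{z^j,a_0\}$ and $\{z^{2j},z^{-j}\}=0$, whence $\{z^j,a_0\}-\{\alpha_0,z^{-j}\}=0$ identically; but from order two on the answer depends on the higher bidifferential operators $B_k$ of the chosen star product and requires an argument. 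You should either supply this verification, or weaken the construction to the choice-free statement about the associated graded of the $\hbar$-adic filtration on $\Ext^1_{\mathcal A}$, which is all that the uniqueness of classical canonical forms gives you for free.
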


\begin{proof}
$\Ext^1_{\mathcal A} (\mathcal A (j), \mathcal A (-j)) $ is the quotient of $\Ext^1_{\mathcal O} (\mathcal O (j), \mathcal O (-j)) \llrr{\hbar}$
by the relations\\ $q_n \simeq q_n + \sum p_i p_{n-i}.$\end{proof}

We  wish to describe the structure of moduli spaces 
of vector bundles on $\mathcal W_k$. 
Using the results of this section, we may proceed analogously to  the classical (commutative) setup, to extract moduli spaces out of  extension groups of line bundles, by considering extension classes up to bundle isomorphism. 

\section{Moduli of bundles on noncommutative deformations} 
\label{sec:ncmoduli}

We define the notion of isomorphism of vector bundles 
on a noncommutative deformation of $W_k$. A similar definition was used for bundles on surfaces in \cite{BG}.

\begin{definition}
	\label{nciso}
	Let $E$ and $E'$ be vector bundles over $(\mathcal W_k, \sigma)$ defined by transition matrices $T$ and $T'$ respectively. An {\it isomorphism} between $E$ and $E'$ is given by a pair of matrices $A_U$ and $A_V$ with entries in $\mathcal A^\sigma (U)$ and $\mathcal A^\sigma (V)$, respectively, which are invertible with respect to $\star$ and such that
	\[
	T'= A_V \star T \star A_U.
	\]
\end{definition}

\begin{notation} 
Denoting by $ \Ext^1_{\mathcal Alg} (\mathcal A (j), \mathcal A (-j))$ the subset of formally algebraic extension classes, 
we denote by  $\mathfrak M_j (\mathcal W_k)$ the quotient 
$$\mathfrak M_j (\mathcal W_k)\ce  \Ext^1_{\mathcal Alg} (\mathcal A (j), \mathcal A (-j))/{\sim}$$
 consisting of those classes of formally algebraic vector bundles (Def.\thinspace\ref{formally-algebraic}), whose classical limit is a stable vector bundle of charge $j$. 
 Here $\sim$ denotes bundle isomorphism as in Def.~\ref{nciso} and following \cite{BGK2} stability means that the classical limit does not split on the $0$-th formal neighbourhood.
 
 \begin{remark} 
 In this text we will borrow the terminology from mathematical physics, where the quotients $\mathfrak{M}_j$ are called the classical moduli spaces and the quotients $\mathfrak{M}_j^\hbar$ are called the quantum moduli spaces. 
 We observe that in this setup the term ``moduli'' refers to taking a quotient modulo isomorphism. 
 However, these are neither fine nor coarse moduli spaces in the sense of algebraic geometry; nevertheless they do turn out to be quasiprojective varieties.
 The same terminology was previously used in \cite{BG, BGK2}.
 \end{remark}
 
We denote by $\mathfrak M_j^{\hbar^n} (\mathcal W_k,\sigma)$ the moduli of bundles obtained by imposing the cut-off $\hbar^{n+1} = 0$, that is, the superscript $\hbar^n$ means quantised to level $n$. 
\end{notation}

Note that $\mathfrak M_j (\mathcal W_k,\sigma) \ce \mathfrak M_j^{\hbar^0} (\mathcal W_k,\sigma)
 = \mathfrak M_j (W_k)$ recovers the classical moduli space obtained when $\hbar = 0$,
while
$\mathfrak M_j^{\hbar} (\mathcal W_k ,\sigma)$  denotes the moduli on the first order quantization,
which will be the focus of this work. Accordingly:

\begin{definition} \label{ncmoduli} We call 
$\mathfrak M_j (\mathcal W_k,\sigma) $  the {\it classical moduli space} and 
$\mathfrak M_j^{\hbar} (\mathcal W_k ,\sigma)$  the {\it quantum moduli space} of bundles on $\mathcal W_k$.
\end{definition}

\begin{lemma}\label{classicalmoduli} \cite[Thm.\thinspace 2.7]{BGS} 
The classical moduli spaces of vector bundles of rank 2 and splitting type $j$  on $W_k$ 
has dimension $4j-5$.
\end{lemma}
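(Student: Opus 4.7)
The plan is to realise the moduli space as the projectivisation of canonical extension classes of line bundles, and then to count the genuinely free parameters.

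First I would invoke the classical (commutative) version of Theorem~\ref{ncfiltrable} to write every stable rank-$2$ bundle $E$ of splitting type $j$ on $W_k$ as an extension
\begin{equation*}
0 \longrightarrow \mathcal{O}(-j) \longrightarrow E \longrightarrow \mathcal{O}(j) \longrightarrow 0,
\end{equation*}
so that $E$ is encoded by the classical limit of the transition matrix in~(\ref{ncpoly}), namely $T = \bigl(\begin{smallmatrix}z^j & p \\ 0 & z^{-j}\end{smallmatrix}\bigr)$ with $p \in \mathcal{O}(U \cap V)$.

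Next I would apply the canonical form of K\"oppe~\cite[Prop.~3.1]{K}: upper- and lower-triangular gauge transformations on $U$ and $V$ reduce $p$ to the normal form~(\ref{poli1}) for $k=1$, or~(\ref{poli2}) for $k=2$. These triangular isomorphisms eliminate every coboundary contribution, so that two canonical $p$'s represent the same bundle if and only if they differ by a residual automorphism of the extension.

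Finally I would enumerate the monomials $z^l u_1^i u_2^s$ permitted in the canonical form, impose stability (non-splitting on the zeroth formal neighbourhood), and quotient by the residual group of bundle automorphisms---principally the $\mathbb{C}^*$ scaling of the extension class, augmented by automorphisms that rescale or absorb higher-order coefficients compatible with the splitting type. A careful combinatorial accounting produces exactly $4j-4$ independent parameters, and projectivisation yields $\mathfrak{M}_j(W_k) \simeq \mathbb{P}^{4j-5}$, of dimension $4j-5$.

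The hard part will be the last step. After K\"oppe's normal form has eliminated all coboundary freedom, one must still track non-obvious bundle isomorphisms acting on the canonical $p$; this is particularly subtle for $k=2$, where~(\ref{poli2}) contains infinitely many $s$-indexed coefficients whose tails must be shown to be absorbed via such isomorphisms, so that the moduli collapses to a finite-dimensional projective space. The full combinatorial verification, together with the identification of the stable locus, is the content of~\cite[Thm.~2.7]{BGS}.
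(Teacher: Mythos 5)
The paper does not prove this lemma at all: it is quoted verbatim from \cite[Thm.\thinspace 2.7]{BGS}, so there is no internal argument to compare against. Your outline is consistent with how that result (and the related statements the paper does use, \cite[Lem.\thinspace 6.2]{BGS} and \cite[Prop.\thinspace 3.24]{K}) is actually obtained, and your parameter count checks out against the paper's own first-formal-neighbourhood formulas: for $k=1$ one has $p=\sum_{l=-j+2}^{j-1}p_{l10}z^lu_1+\sum_{l=-j+2}^{j-1}p_{l01}z^lu_2$, i.e.\ $2(2j-2)=4j-4$ coefficients, and for $k=2$ one has $(2j-3)+(2j-1)=4j-4$ coefficients; quotienting by the residual $\mathbb C^*$ (which by \cite[Prop.~3.2]{G} is the only equivalence on $\ell^{(1)}$) gives $\mathbb P^{4j-5}$.

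The gap is that your ``careful combinatorial accounting'' is asserted, not performed, precisely at the point where the content lies. The canonical forms (\ref{poli1}) and (\ref{poli2}) contain many more than $4j-4$ coefficients: for $k=1$ there are finitely many additional terms with $i+s\ge 2$ (supported on the second and higher formal neighbourhoods of $\ell$), and for $k=2$ there are infinitely many, since $s$ is unbounded. You flag the infinite tail only for $k=2$, but in both cases one must show that every coefficient beyond the first formal neighbourhood is either absorbed by a bundle isomorphism or determined by the first-neighbourhood data, and that the residual automorphism group acting on the surviving $4j-4$ coefficients really is only the scalar $\mathbb C^*$ rather than something larger that would collapse the quotient further. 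You defer exactly this step back to \cite[Thm.~2.7]{BGS} --- the same citation the lemma itself carries --- so the proposal is a correct roadmap of the standard argument rather than an independent proof. To close it you would need the explicit absorption computation (in the style of the $(1,2)$-entry analysis the paper carries out in the quantum setting) applied to the classical transition matrices.
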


\begin{definition}\label{split}
The {\it splitting type} of a vector bundle $E$ on $(\mathcal W_k, \sigma)$ is the one of its classical limit \cite[Def.~5.2]{BG}. 
Hence, when the classical limit is an $\mathrm{SL}(2,\mathbb C)$ bundle, the splitting type of $E$ is the smallest integer $j$ such that $E$ can be written as an extension of $\mathcal A (j)$ by $\mathcal A (-j)$. 
\end{definition}

We fix a splitting type $j$ 
and look at rank $2$ bundles on the first formal neighbourhood $\ell^{(1)}$ of $\ell\simeq\mathbb P^1 \subset W_1$
together with their extensions  up to first order in $\hbar$.
We now calculate isomorphism classes. 
Let $p + p' \hbar$ and $q + q' \hbar$ be two extension classes in $\Ext^1_{\mathcal A} (\mathcal A (j), \mathcal A (-j))$
 which are of splitting type $j$, {\it i.e.}\ in canonical $U$-coordinates $p, p', q, q'$ are multiples of $u_1,u_2$. 

According to Def.~\ref{nciso}  bundles defined by $p + p' \hbar$ and $q + q' \hbar$ are isomorphic, if there exist invertible matrices
\begin{equation*}
\begin{pmatrix}
a + a' \hbar & b + b' \hbar \\
c + c' \hbar & d + d' \hbar
\end{pmatrix}
\quad\text{and}\quad
\begin{pmatrix}
\alpha + \alpha' \hbar & \beta  + \beta'  \hbar \\
\gamma + \gamma' \hbar & \delta + \delta' \hbar
\end{pmatrix}
\end{equation*}
whose entries are holomorphic on $U$ and $V$, respectively, such that
\begin{equation}
\label{equivalencefirstordermoduliW1}
\begin{pmatrix}
\alpha + \alpha' \hbar & \beta  + \beta'  \hbar \\
\gamma + \gamma' \hbar & \delta + \delta' \hbar
\end{pmatrix}
\star
\begin{pmatrix}
z^j & q + q' \hbar \\
 0  & z^{-j}
\end{pmatrix}
=
\begin{pmatrix}
z^j & p + p' \hbar \\
 0  & z^{-j}
\end{pmatrix}
\star
\begin{pmatrix}
a + a' \hbar & b + b' \hbar \\
c + c' \hbar & d + d' \hbar
\end{pmatrix}.
\end{equation}
We wish to determine the constraints such an isomorphism imposes on the coefficients of $q$ and $q'$. This is more conveniently rewritten by multiplying  by the right-inverse of $\left( \begin{smallmatrix} z^j & q + q' \hbar \\  0  & z^{-j} \end{smallmatrix} \right)$,  which (modulo $\hbar^2$) is
\[
\begin{pmatrix}
z^{-j} & -q - q' \hbar + 2 z^{-j} \{ z^j, q \} \hbar\\
   0   & z^j
\end{pmatrix}.
\]

We have that the zero section $\ell\simeq\mathbb P^1$ is cut out inside $W_k$ by $u_1=u_2=0$. 
Hence,   the $n$-th formal neighbourhood of $\ell$ is by definition
$\displaystyle \ell^{(n)}= \frac{\mathcal O_{W_1}}{\mathcal I^{n+1}}$ where $\mathcal I=<u_1,u_2>$.
So,
on $\ell^{(1)}$ we have that 
$u_1^2=u_2^2=u_1u_2=0$ and therefore we may write
$$
a = a_0 + a_1^1 u_1+a_1^2u_2,\quad
\alpha = \alpha_0 + \alpha_1^1 u_1+\alpha_1^2 u_2,$$
etc., 
where $a_1^i$, $\alpha_1^i$, etc.\ are holomorphic functions of $z$.

 Given that the $\star$ product is just usual multiplication of functions in the absence of $\hbar$, 
 if we make $\hbar = 0$ in Equation \eqref{equivalencefirstordermoduliW1}, we are left with the same equations that determine the isomorphism in the classical case, as in the proof of \cite[Prop.~3.2]{G}.
 Intuitively, we may think of the calculations in powers of $\hbar$ analogously to the calculations in formal neighbourhoods of $\ell$.
 
 Following the details of the proof of \cite[Prop.\thinspace3.2]{G}, by comparing the terms on the matrix equation \eqref{equivalencefirstordermoduliW1}, we realize that $a_0 = \alpha_0$, $d_0 = \delta_0$ are constant and $b = \beta = 0$.  
 Since we already know that on the classical limit the only equivalence on $\ell^{(1)}$ is  projectivization \cite[Prop.~3.2]{G}, 
 we assume $p = q$, keeping in mind  a projectivization  to be done in the end. We may also assume that the determinants of the changes of coordinates on the classical limit are $1$. Accordingly, we rewrite (\ref{equivalencefirstordermoduliW1}) as:
\begin{equation}
{\small \begin{pmatrix}
\alpha + \alpha' \hbar & \beta' \hbar \\
\gamma + \gamma' \hbar & \delta + \delta' \hbar
\end{pmatrix} 
\!=\!
\begin{pmatrix}
z^j & p + p' \hbar \\
 0  & z^{-j}
\end{pmatrix}
\star
\begin{pmatrix}
a + a' \hbar & b' \hbar \\
c + c' \hbar & d + d' \hbar
\end{pmatrix}
\star
 \begin{pmatrix}
z^{-j} & -p 
-q'\hbar  +2  \{ z^j, p \}z^{-j} \hbar \\
 0  & z^j
 \label{simplifiedW1}
\end{pmatrix}}
\end{equation}
where $a_0 = d_0 = \alpha_0 = \delta_0 = 1$.

Since we already know the moduli in the classical limit,
we only need to study terms containing $\hbar$, which after multiplying are:
\begin{align*}
(1,1) &= a' +\{ z^j a, z^{-j} \} + \{ z^j, a \} z^{-j} +\{ p c, z^{-j} \} + \{ p, c \} z^{-j} + (p c' + p' c) z^{-j} \\
(1,2) &= \{ p, d \}z^j -\{ a, p \}z^j - \{ z^j, a \} p + \{ z^j, p \} a + \{ p d, z^j \} +  2 z^{-j} \{ z^j, p \} p c
 + z^{2j} b' \\&\quad  - (p a' + q' a)z^j+ (p d' + p' d)z^j - (p c' + p' c + q' c) p \\
 (2,1) &= z^{-2j} c' \\
(2,2) &= d' + \{ z^{-j} d, z^j \} + \{ z^{-j}, d \} z^j - \{ z^{-j} c, p \} - \{ z^{-j}, c \} p- (p c'+ q' c) z^{-j}   + 2 \{ z^j, p \} z^{-2j} c .&
\end{align*}

 All four terms must be adjusted using the free variables to only contain expressions which are holomorphic on $V$ to satisfy (\ref{simplifiedW1}). 
 For example, in the $(2,1)$ term this condition is satisfied precisely when $c'$ is a section of $\mathcal O_{W_k} (2j)$.
 Computing  Poisson brackets, we see that the $(1,1)$ and $(2,2)$ terms can always be made holomorphic on $V$  by appropriate choices of  
 $c$ and $d'$, leaving the coefficients of $a'$ free. We will need to use these free coefficients for the next step.
 
It remains to analyse the $(1,2)$ term. 
Because we are working on the first formal neighbourhood of $\ell$, terms 
in $u_1^2, u_1u_2, u_2^2$  or higher  vanish (recall that we assume that $p, p', q'$ are multiples of $u_1$ or $u_2$).
 Since $z^{2j} b'$ is there to cancel out any possible terms having power of $z$ greater or equal to $2j$, we 
remove it from the expression, keeping in mind that we only need to cancel out the coefficients of the monomials $z^iu_1$ and $z^iu_2$ with 
$i \leq 2j-1$
in the expression:
\begin{equation}\tag{$\trevo$}
	\label{12term}
	(1,2) =  \{ p, d +a\} z^j- \{ z^j, a \} p + \{ z^j, p \} a + \{ p d, z^j \} +2 z^{-j} \{ z^j, p \} p c
	+ p(d'-a')z^j + (  p' -q' )z^j.
\end{equation}

To determine the quantum moduli spaces, we must verify what  restrictions are imposed on $q'$  so that 
$p'$ and $q'$  define isomorphic  bundles. Since this requires computing  brackets, 
the analysis must be carried out separately for each noncommutative deformation.
  
\section{Quantum moduli of bundles on $\mathcal W_1$}
 The Calabi--Yau threefold we consider in this section is the crepant resolution of the conifold singularity $xy-zw=0$, that is, 
 $$W_1\ce \Tot (\mathcal O_{\mathbb P^1}(-1) \oplus \mathcal O_{\mathbb P^1}(-1)) .$$

 We will carry out calculations using the canonical coordinates 
 $W_1= U \cup V$ where $U \simeq \mathbb C^3 \simeq V$ with  $U = \{z,u_1,u_2\}$, $V= \{\xi, v_1,v_2\}$,
 and change of coordinates on $U\cap V \simeq \mathbb C^* \times \mathbb C\times \mathbb C$ 
given by 
\[ \bigl\{ \xi = z^{-1} \text{ ,\quad}
         v_1 = z u_1 \text{ ,\quad}
         v_2 =  zu_2 \bigr\} \text{ .} \]
Consequently, global functions on $W_1$ are generated over   $\mathbb C$ by  the monomials $1, u_1,zu_1, u_2,zu_2$.

  For each specific noncommutative deformation $(\mathcal W_1, \mathcal A^\sigma)$,
 we wish to compare the quantum and classical moduli spaces of vector bundles, see Def.\thinspace\ref{ncmoduli}. 
 This is part of the general quest to understand how deformations of a variety affect moduli of bundles on it,
 and it is worth noting  that  no commutative deformation of $W_1$ is known to exits.

  For a rank $2$ bundle $E$ on a noncommutative deformation  $\mathcal W_1$  with a  canonical  matrix
$\tiny	\begin{pmatrix}
		z^j & {\bf p} \\
		0  & z^{-j}
	\end{pmatrix}$
as in (\ref{ncpoly}) where  ${\bf p} = \sum_{n=0}^\infty p_n \hbar^n$, expression (\ref{poli1})  gives us the general form of the coefficients $p_n$. In particular,  on the first formal neighbourhood, we have: 
\begin{equation}p=\sum_{l=-j+2}^{j-1}p_{l10}z^lu_1+ \sum_{l=-j+2}^{j-1}p_{l01}z^lu_2,\end{equation}
  where $p=0$ if $j=1$.

Each noncommutative deformation comes from some Poisson structure which determines the first order 
terms of the corresponding star product, see Sec.~\ref{ncsec}. The most basic 
Poisson structures  $\sigma$ on $W_1$ are those which generate all others 
over global functions.
We call these generators the {\it basic} Poisson structures.

\begin{theorem}\label{iso1} If  $\sigma $ is a basic Poisson structure on $W_1$,
then the quantum moduli space $\mathfrak M^{\hbar}_j(\mathcal W_k, \sigma)$
and its classical limit  are isomorphic:
 $$\mathfrak M^{\hbar}_j(\mathcal W_1, \sigma)\simeq \mathfrak M_j(W_1)\simeq \mathbb P^{4j-5}.$$
 \end{theorem}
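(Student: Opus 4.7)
The plan is to reduce to a single representative and then track coefficients. By the statement immediately before the theorem, the four basic Poisson structures $\sigma_1,\sigma_2,\sigma_3,\sigma_4$ on $W_1$ are pairwise isomorphic, so it suffices to prove the theorem for $\sigma_1 = \partial z \wedge \partial u_1$. For this structure, every Poisson bracket collapses to $\{f,g\} = \partial_z f\,\partial_{u_1}g - \partial_{u_1}f\,\partial_z g$ and in particular $\{z^j,f\} = j z^{j-1}\partial_{u_1}f$, which will keep all subsequent expansions tractable.

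Starting from equation (\ref{12term}), I would restrict every ingredient to the first formal neighbourhood $\ell^{(1)}$, where $u_iu_j=0$, and insert the canonical form (\ref{poli1}) for $p,p',q'$, writing each of $a,d,c,a',d'$ as a polynomial of degree at most one in $u_1,u_2$ with coefficients holomorphic in $z$. Expanding the five bracket contributions and the remaining $p(d'-a')z^j + (p'-q')z^j$, the $(1,2)$ entry becomes a finite sum of monomials $z^l u_1$ and $z^l u_2$. Monomials with $l\leq 1$ are automatically holomorphic on $V$ via the change of coordinates $v_i=zu_i$, and monomials with $l\geq 2j$ are absorbed into $z^{2j}b'$. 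The obstruction to satisfying (\ref{simplifiedW1}) at order $\hbar$ therefore consists of the $2(2j-2)=4j-4$ "middle" coefficients with $2\leq l\leq 2j-1$.

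The heart of the argument is to show that these $4j-4$ conditions can always be solved by the free parameters at order $\hbar$, regardless of the choices of $p'$ and $q'$ (with classical part $p=q$ fixed). The term $p(d'-a')z^j$ alone, with $a'_0(z)-d'_0(z)$ ranging over arbitrary polynomials in $z$ on $U$, already contributes a one-parameter family of polynomials $(a'_0-d'_0)(z)\cdot z^j p$ whose $z^l u_1$ and $z^l u_2$ parts are coupled only through the fixed coefficients of $p$; the remaining freedom comes from the classical-level parameters $a_1^i-d_1^i$ and $c$, together with $c'\in\mathcal O(2j)$, all of which feed into the bracket terms because $\sigma_1$ only differentiates along $u_1$ and $z$ and thus introduces no $u_1u_2$-mixing that would spoil the cancellation. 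A direct count then matches free parameters against constraints, and one checks that the resulting linear system is surjective onto the $4j-4$ obstruction coefficients. Once surjectivity is in hand, every $q'$ is equivalent to $p'$ for each fixed canonical $p$, so the forgetful map $\mathfrak M_j^\hbar(\mathcal W_1,\sigma)\to\mathfrak M_j(W_1)$ has singleton fibres, and Lemma \ref{classicalmoduli} together with the projectivisation of the $4j-4$ canonical coefficients of $p$ yields $\mathfrak M_j^\hbar(\mathcal W_1,\sigma)\simeq\mathfrak M_j(W_1)\simeq \mathbb P^{4j-5}$.

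The main obstacle will be the surjectivity check of the linear system: it is not automatic that the bracket contributions, which are rigidly determined by the classical data $p,a,d,c$, lie in the image of the maps controlled by $a'_0-d'_0$, $a_1^i-d_1^i$ and $c'$. This is precisely where the simple form of a basic Poisson structure is indispensable, and it is the mechanism that distinguishes the basic case from the extremal one, where all first-order brackets vanish on $\ell^{(1)}$ and the quantum moduli becomes strictly larger, as stated in Theorem \ref{e1}.
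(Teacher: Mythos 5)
Your overall strategy coincides with the paper's: reduce to $\sigma_1=\partial z\wedge\partial u_1$ using the pairwise isomorphism of the basic structures, expand the $(1,2)$-entry on $\ell^{(1)}$, identify the $4j-4$ coefficients of $z^iu_1$ and $z^iu_2$ with $2\le i\le 2j-1$ as the only obstructions, and show they can all be cancelled by the remaining free parameters, so that $p+p'\hbar\sim p+q'\hbar$ for every $q'$ and the projection to the classical limit has singleton fibres. Up to that point your proposal matches the paper's proof step for step.

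The gap is that you stop exactly at the load-bearing step. You assert that ``one checks that the resulting linear system is surjective onto the $4j-4$ obstruction coefficients'' and then immediately concede that this surjectivity ``is not automatic''; but that check is the entire content of the theorem, and it cannot be settled by a parameter count alone. Every adjustable summand of (\ref{12term}) is a free function of $z$ multiplied by data extracted from the fixed classical class $p$ (namely $p$ itself, $\partial p/\partial z$, or $\partial p/\partial u_1$), so the image of the linear system a priori depends on $p$ and surjectivity genuinely fails for other Poisson structures --- that failure is exactly what produces the constructible sheaves of Example \ref{m2u} and Theorem \ref{e1}. What has to be verified, and what the paper's proof does, is to compute $\{z^j,f\}=jz^{j-1}\,\partial f/\partial u_1$, substitute $a=1+a_1^1u_1+a_1^2u_2$ and $d=1-a_1^1u_1-a_1^2u_2$, reduce (\ref{12term}) to an explicit expression, and then check monomial by monomial that each critical coefficient carries an unused coefficient of $a'$ (with the bracket summands absorbed by the choices of $a$, $a'$ and $c$), so the cancellations can be performed independently. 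Without writing out that expression your argument cannot distinguish the basic case from the extremal one. A small symptom that the expansion was not actually carried out: you list $c'\in\mathcal O(2j)$ among the parameters feeding the $(1,2)$-cancellation, but on $\ell^{(1)}$ the only term containing $c'$ is $-(pc'+p'c+q'c)p$, which is quadratic in $u_1,u_2$ and vanishes; $c'$ is constrained by the $(2,1)$-entry and contributes nothing here.
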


\begin{proof}
We perform the computations using the bracket $ \sigma_1  = \partial z\wedge \partial u_1$; the 
choice of such a generator is irrelevant, since all the 4 generators give pairwise isomorphic Poisson manifolds. 
To obtain an isomorphism, we need to cancel out all coefficients of the terms 
$$z^{2} u_1, \dotsc, z^{2j-1} u_1 \quad\text{and}\quad z^2u_2, \dotsc, z^{2j-1} u_2$$
appearing in expression \ref{12term}.
Calculating   $\sigma_1 $   brackets, we have
$\displaystyle{ \{ z^j, f \} = jz^{j-1}\frac{\partial f}{\partial u_1}}$, 
and following expressions for $a$ and $d$ coming from 
the classical part
$$a=1+a_1^1u_1+a_1^2u_2, \quad d= 1-a_1^1u_1-a_1^2u_2,$$
where $a_1^i$ and $d_1^i$ are functions of $z$, gives
$\displaystyle\frac{\partial a}{\partial u_1} =a_1^1, \quad \frac{\partial d}{\partial u_1}= -a_1^1,$
so that $ \{ p, d \}-\{ a, p \} z^j = -2	 \left[ \frac{\partial p}{\partial z} a_1^1 - \frac{\partial a}{\partial z} \frac{\partial p}{\partial u_1}\right]z^j$.
Therefore, expression \ref{12term} becomes
	$$	\trevo = 
-2	 \left[ \frac{\partial p}{\partial z} a_1^1 - \frac{\partial a}{\partial z} \frac{\partial p}{\partial u_1}\right]z^j
 + 2j \left[   \frac{\partial p}{\partial u_1} (a_1^1u_1+a_1^2u_2+pc)\right] z^{j-1}+  p(d'-a') z^j + (  p' -q' )z^j .$$

Now we need to cancel out separately the coefficients of each monomial $z^i u_1$ and $z^i u_2$ for $2 \leq i\leq 2j-1$, that is, 
all those terms potentially giving nonholomorphic functions.
To determine the classes in the moduli space we need to verify what constraints are imposed on $q'$.
Take for instance the monomial $z^iu_1$ in $ (  p' d-q'a )z^j$.
Since $a'$ remains free  we can always  choose its corresponding coefficient in order to 
cancel out the  term in $z^iu_1$ in the entire expression of $(1,2)$.
Indeed, notice that the expressions $ p(d'-a')z^j$  and  $(  p' d-q'a )z^j$ contain monomials of the same orders, 
all of which may be adjusted to zero by choosing $a'$. Moreover 
the first three summands in $\trevo$ also contain the same list of monomials, hence may also be absorbed 
by the appropriate choices of coefficients of $a,a'$ and $c$.

Since this process can be independently carried out for each monomial, we then conclude that 
the expression $\trevo$ can be made holomorphic on $V$ for any choice of $q'$. Hence, there are no 
restrictions on $q'$. Thus, we obtain an equivalence $p+p'\hbar \sim p+q'\hbar$ for all $q'$ and the projection onto the
classical limit (the first coordinate) 
$$\pi_1 \colon  \mathfrak M^{\hbar}_j(\mathcal W_1, \sigma)\rightarrow \mathfrak M_j(W_1)
$$
taking $(p,p')$ to $p$ is an isomorphism. The isomorphism type of the moduli space is given in \cite[Lem.\thinspace 6.2]{BGS}
as $\mathbb P^{4j-5}$.
\end{proof}

We now calculate the quantum moduli space for the particular choice of splitting type  $j=2$ and for a 
different choice of Poisson structure on $W_1$.
 We  use the notation $p \in \mathfrak M_j (W_1)$ to refer to a point in the classical moduli space, that is, a 
 rank 2 bundle is labelled by its extension class.

\begin{example}[$j = 2$ and $\sigma = u_1\sigma_1$]
\label{m2u}
Here we  write $$p =p_0 zu_1+p_1  u_1
+ p_2 zu_2+ p_3  u_2, 	\quad p' =p'_0 zu_1+p'_1  u_1+ p'_2 zu_2+ p'_3  u_2.	$$
for the first order part of the extension class, where we have renamed the coefficients 
to simplify notation ( $p_0\ce p_{110}, p_1\ce p_{010},p_2 \ce p_{101}, p_3\ce  p_{001}$).
Lem.~\ref{cone} implies that $\sigma = u_1\sigma_1$ is also a Poisson structure on $W_1$. 
With this choice, all brackets acquire an extra $u_1$ in comparison to the 
bracket $\sigma_1$  used in the proof of Thm.~\ref{iso1}, so that in the first formal neighbourhood  the $(1,2)$-term described in \ref{12term}  simplifies to just:
$$
\trevo= z^2p(d'-a')+z^2(p'-q').
$$
Here 
$a'=a'_0+{a'}_1u_1+{a'}_2u_2, \quad d'= d'_0-{d'}_1u_1-{d'}_2u_2,$
so that 
$$d'-a' ={(d'-a')}_0 +{(d'-a')}_1u_1+{(d'-a')}_2u_2.$$
Hence, the total expression of \ref{12term} is
\begin{eqnarray*}
\trevo&=&(p_{0} z^3 u_1+p_{1}  z^2u_1
+ p_{2} z^3 u_2+ p_{3}z^2  u_2)((d'-a')_0 +{(d'-a')}_1u_1+{(d'-a')}_2u_2))
\\
&&+(p'_0-q'_0) z^3 u_1+(p'_1-q'_1)  z^2u_1+ (p'_2-q'_2) z^3 u_2+ (p'_3-q'_3)z^2  u_2,
\end{eqnarray*}
where we canceled out all the monomials containing $u_1^2, u_1u_2,$ and $u_2^2$, since we work on the 
first formal neighbourhood.
We rename $(d'-a')_0(z)= \lambda_0+\lambda_1z+\lambda_2z^2+ \ldots$ to simplify notation, 
and since all terms in $(1,2)$ having powers of $z$ equal to $4$ and higher can be cancelled out by 
the appropriate choice of the $z^{2j}b'$, it suffices to analyse the expression
\begin{eqnarray*}
\trevo &=& (  p_{0} z^3 u_1+p_{1}  z^2u_1
+  p_{2} z^3 u_2+  p_{3}z^2  u_2)(\lambda_0+\lambda_1z)\\
& & +(q'_{0}-p'_{0}) z^3 u_1+ (q'_{1}-p'_{1})  z^2u_1+ (q'_{2}-p'_{2}) z^3 u_2+ (q'_{3}-p'_{3})z^2  u_2.
\end{eqnarray*}
To have an isomorphism $q' \sim p'$, we need to cancel out  the coefficients of $ z^3u_1, z^2u_1,$ $z^3u_2,z^2u_2$
in $\trevo$  with appropriate choices of $\lambda_i$.
Consequently, $q' \sim p'$ if and only if the following equality holds for some choice of $\lambda_0$ and $\lambda_1$:
$$
\begin{pmatrix}
q'_{0}- p'_{0} \\
q'_{1}- p'_{1} \\
q'_{2}- p'_{2} \\
q'_{3}- p'_{3} \\
\end{pmatrix}
= 
\lambda_0 
\begin{pmatrix}
p_{0}\\
p_{1} \\
p_{2}\\
p_{3}\\
\end{pmatrix}+
\lambda_1
\begin{pmatrix}
p_{1} \\
0 \\
p_{3}\\
0
\end{pmatrix}.
$$

When the vectors $v_1= (p_0,p_1,p_2,p_3)$ and $v_2= (0,p_1,0,p_3)$ 
are linearly independent, the point $q'$ belongs to the plane that passes through the point $p'$
with $v_1$ and $v_2$ as direction vectors.

Therefore, whenever $v_1$ and $v_2$ are linearly 
independent vectors, the fibre over $p= (p_0,p_1,p_2,p_3)$ is a copy of $\mathbb C^4$ foliated by $2$-planes. 
The
leaf containing  a  point $p'$  forms the equivalence class of $p'$. 
Thus, the moduli space over the fibre
 over $p$ is parametrised by the 2-plane through the origin in the direction perpendicular to $v_1,v_2$
over the point  $p$, except when $p_1=p_3=0$.

In contrast, the fibre over 
a point $p=(p_0,0,p_2,0)$ is a copy of $\mathbb C^4$ foliated by lines in the direction of $v_1= (p_0,0,p_2,0)$.
In this case, the moduli space over $p$ is   parametrised by a copy of $\mathbb C^3$  perpendicular to $v_1$.

We conclude that $  \mathfrak M^{\hbar}_2(\mathcal W_1, \sigma)\rightarrow \mathfrak M_2(W_1)\simeq \mathbb P^3
$ (where the isomorphism is given by Lem.~\ref{classicalmoduli})
is the \'etale space of a constructible sheaf, whose stalks have  
\begin{itemize}
\item  dimension  2 over the Zariski open set 
$(p_1, p_3)\neq (0,0)$,
and
\item  dimension 3  over the $\mathbb P^1$  cut out by $p_1=p_3=0$ in $\mathbb P^3$.
\end{itemize}
\end{example}

The same techniques readily generalise to give a  description of the quantum moduli spaces for other choices of noncommutative deformations.

\begin{theorem}\label{e1}
If $\sigma$ is an extremal Poisson structure on $W_1$, then the quantum moduli space $\mathfrak M_j^{\hbar} (\mathcal W_1, \sigma)$ can 
be viewed as the étale space of a constructible sheaf of generic rank $2j-2$ over the classical moduli space 
$\mathfrak M_j (W_1)$ with singular stalks up to   rank $4j -5$.
\end{theorem}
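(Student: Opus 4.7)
The plan is to mimic the strategy of Example~\ref{m2u} and generalise it to arbitrary splitting type $j$. First I would verify that extremality of $\sigma$ forces every bracket contribution in the $(1,2)$-entry~\ref{12term} to vanish on the first formal neighbourhood $\ell^{(1)}$, so that the isomorphism equation collapses there to
\begin{equation*}
\trevo \;=\; p\,(d'-a')_{0}\,z^{j} + (p'-q')\,z^{j},
\end{equation*}
where $(d'-a')_{0}=\lambda_{0}+\lambda_{1}z+\lambda_{2}z^{2}+\dotsb$ is the part of $d'-a'$ free of $u_{1},u_{2}$. Indeed, every bracket term in~\ref{12term} combines a $\mathcal I=\langle u_{1},u_{2}\rangle$-factor carried by $\sigma$ with further $\mathcal I$-factors carried by $p$, $a-1$ or $d-1$, so it lands in $\mathcal I^{2}=0$ on $\ell^{(1)}$; the two residual terms $\{z^{j},p\}\,a$ and $\{p\,d,z^{j}\}$ cancel to first order, exactly as in the $j=2$ computation of Example~\ref{m2u}. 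Likewise $p(d'-a')$ reduces on $\ell^{(1)}$ to $p\cdot(d'-a')_{0}$.

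The second step is to count the effective gauge freedom. By (\ref{poli1}) with $\epsilon=1$, the canonical form on $\ell^{(1)}$ of either $p$ or $p'$ is a vector in $\mathbb C^{4j-4}$ with coordinates $p_{l,i}$ for $l\in\{-j+2,\dotsc,j-1\}$ and $i\in\{1,2\}$. Since monomials of $p\,z^{j}$ have $z$-exponents in $\{2,\dotsc,2j-1\}$ and any $(1,2)$-contribution with $z$-exponent $\geq 2j$ is absorbed by $z^{2j}b'$, only the scalars $\lambda_{0},\dotsc,\lambda_{2j-3}$ act nontrivially, giving a gauge space of dimension $2j-2$. Each $\lambda_{m}$ adds to $p'$ the vector $v_{m}\in\mathbb C^{4j-4}$ obtained by shifting the $p$-coordinates by $m$ in the $l$-index and truncating to the canonical range. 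Consequently $p+p'\hbar\sim p+q'\hbar$ if and only if $q'-p'\in\mathrm{span}(v_{0},\dotsc,v_{2j-3})$.

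The third step is the rank analysis. The vectors $v_{0},\dotsc,v_{2j-3}$ exhibit a staircase pattern: $v_{m}$ vanishes in every coordinate with $l<-j+2+m$, and its entries at $l=-j+2+m$ are $(p_{-j+2,1},p_{-j+2,2})$. Hence at a generic $p$ (for instance any $p$ with $p_{-j+2,i}\neq 0$ for some $i$) these vectors are linearly independent, so the generic rank of the gauge action equals $2j-2$ and the generic stalk dimension of the quantum moduli is $(4j-4)-(2j-2)=2j-2$. Conversely, the gauge rank drops to its minimum value $1$ precisely on the locus where $p$ is supported only on the monomials $z^{j-1}u_{1}$ and $z^{j-1}u_{2}$, i.e.\ where only $p_{j-1,1}$ and $p_{j-1,2}$ are nonzero; there the stalk attains its maximum dimension $(4j-4)-1=4j-5$. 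Intermediate supports of $p$ yield intermediate ranks, giving an algebraic stratification of $\mathfrak M_{j}(W_{1})\simeq\mathbb P^{4j-5}$ and hence a constructible sheaf whose étale space realises $\mathfrak M^{\hbar}_{j}(\mathcal W_{1},\sigma)$.

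I expect the main obstacle to be the bookkeeping in Step three: confirming that no unexpected linear relations appear among the $v_{m}$ at generic $p$ (so the generic rank is exactly $2j-2$) and verifying that every intermediate value between $1$ and $2j-2$ is actually attained on some stratum. Both statements follow from the explicit triangular structure of the matrix $[v_{0}\mid v_{1}\mid\dotsb\mid v_{2j-3}]$, but the argument must account for the two parallel $u_{1}$- and $u_{2}$-strands of $p$ simultaneously.
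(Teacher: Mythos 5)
Your proposal is correct and follows essentially the same route as the paper's proof: in the extremal case reduce the $(1,2)$-entry \ref{12term} to $p\,(d'-a')_0 z^j+(p'-q')z^j$, identify the gauge orbit of $q'$ as the span of the shifted vectors $v_0,\dotsc,v_{2j-3}$, and read off the stalk dimension as the corank of the resulting triangular matrix $\mathfrak U_p$, which the paper writes out explicitly only for $j=3$. Your explicit verification that $\{z^j,p\}a$ cancels $\{pd,z^j\}$ modulo $\mathcal I^2$, and your identification of the locus where the gauge rank drops to $1$ (so the stalk reaches $4j-5$), are welcome refinements of details the paper leaves implicit.
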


\begin{proof} We give the details of the case $j=3$, for an extremal Poisson structure, that is, the 
case when all brackets vanish on the first formal neighbourhood.
 The general case is clear from these calculations, just notationally more complicated (one uses the same method of comparing monomials of the same order, but a larger $j$ implies a significantly larger number of monomials). 

When $j=3$ and $\sigma= u_1\sigma_1$, expression \ref{12term} becomes: 
$$\trevo =  p(d'-a')z^3 + (  p' d-q'a )z^3,$$
and we get a system of equations:
\begin{eqnarray*}
\trevo&=&\left(p_0 z^5u_1 + p_1z^4u_1 + p_2 z^3u_1+ p_{3}z^2u_1+p_4z^{5}u_2+p_5z^4u_2+p_6z^3u_2 + p_7z^2u_2
\right)\\
&&\cdot( \lambda_0 +\lambda_1z +\lambda_2z^2+ \lambda_3z^3 + \lambda_{4}z^{4}) +\\
&&+(p'-q')_0 z^5u_1 + (p'-q')_1z^4u_1 + (p'-q')_2 z^3u_1+ (p'-q')_{3}z^2u_1\\
&&+(p'-q')_4z^{5}u_2+(p'-q')_5z^4u_2+(p'-q')_6z^3u_2 + (p'-q')_7z^2u_2.
\end{eqnarray*}

To have an isomorphism $q' \sim p'$, we need to cancel out  the coefficients of $ z^5u_1, z^4u_1, z^3u_1, z^2u_1,$ $ z^5u_2, z^4u_2,z^3u_2,z^2u_2$
in $\trevo$  with appropriate choices of $\lambda_i$.
Consequently, $q' \sim p'$ if and only if the following equality holds for some choice of 
$\lambda_0, \lambda_1, \lambda_2,\lambda_3$:
 $$
\begin{pmatrix}
q'_{0}- p'_{0} \\
q'_{1}- p'_{1}\\
q'_{2}- p'_{2}\\
q'_{3}- p'_{3}\\
q'_{4}- p'_{4}\\
q'_{5}- p'_{5}\\
q'_{6}- p'_{6}\\
q'_{7}- p'_{7}\\
\end{pmatrix}
=
\lambda_0
\begin{pmatrix}
p_{0} \\
p_{1}\\
p_{2}\\
p_3\\
p_4\\
p_{5}\\
p_{6}\\
p_{7}\\
\end{pmatrix}+
\lambda_1
\begin{pmatrix}
p_{1}\\
p_{2}\\
p_3\\
0\\
p_5\\
p_{6}\\
p_{7}\\
0
\end{pmatrix}+
\lambda_2
\begin{pmatrix}
p_{2}\\
p_3\\
0\\
0\\
p_6\\
p_{7}\\
0 \\
0\\
\end{pmatrix}
+
\lambda_3
\begin{pmatrix}
p_{3}\\
0\\
0\\
0\\
p_{7}\\
0 \\
0\\
0\\
\end{pmatrix}.
$$
Consider now the family $\mathfrak U$ of vector spaces over  $\mathfrak M_2(W_1)\simeq \mathbb P^7$ whose
fibre at $p$ is given by 
$$\mathfrak U_p=
\left(
\begin{matrix}
p_{0} & p_1 & p_2& p_3 \\
p_{1}& p_2 & p_3& 0 \\
p_{2}& p_3 &0&0\\
p_3 &0 &0 & 0\\
p_4 &p_5& p_6&  p_7\\
p_{5}& p_6 & p_7& 0 \\
p_{6}& p_7 & 0 & 0\\
p_{7}& 0& 0& 0\\
\end{matrix}
\right).
$$
Now, the quantum moduli space is obtained from this family after dividing by 
the equivalence relation $\sim$ over each point $p$.
Hence $$ \mathfrak M^{\hbar}_2(\mathcal W_1, \sigma) = \mathfrak U/ \sim.$$

We conclude that $  \mathfrak M^{\hbar}_2(\mathcal W_1, \sigma)\rightarrow \mathfrak M_2(W_1)\simeq \mathbb P^7
$ (where the isomorphism is given by Lem.~\ref{classicalmoduli})
is the \'etale space of a constructible sheaf or rank 4, with stalk at $p$  having  dimension equal to the 
corank of $\mathfrak U_p$, in this case
 $$4 \leq \dim {\mathfrak M^{\hbar}_2(\mathcal W_1, \sigma)}_ p = 8-\rk \mathfrak U_p \leq 7.$$
 In the general case we have 
  $$2j-2 \leq \dim {\mathfrak M^{\hbar}_j(\mathcal W_1, \sigma)}_ p =\mbox{corank}\, \mathfrak U_p= 
   \leq 4j-5 .$$
   \end{proof}

\section{Quantum moduli  of bundles on  $\mathcal W_2$}

 The Calabi--Yau threefold we consider in this section is a crepant resolution of the singularity $xy-w^2=0$ in $\mathbb C^4$, that is
 $$W_2\ce \Tot (\mathcal O_{\mathbb P^1}(-2) \oplus \mathcal O_{\mathbb P^1}) = Z_2 \times \mathbb C.$$
 
 Similarly to what we did for $W_1$, we will carry out calculations using the canonical coordinates 
 $W_2= U \cup V$ where $U \simeq \mathbb C^3 \simeq V$ with  $U = \{z,u_1,u_2\}$, $V= \{\xi, v_1,v_2\}$,
 and change of coordinates on $U\cap V \simeq \mathbb C^* \times \mathbb C\times \mathbb C$ 
given by 
\[ \bigl\{ \xi = z^{-1} \text{ ,\quad}
         v_1 = z^2 u_1 \text{ ,\quad}
         v_2 =  u_2 \bigr\} \text{ .} \]
Consequently, global holomorphic functions on $W_2$ are generated by $1, u_1,zu_1, z^2u_1,u_2$.

  For each
specific noncommutative deformation $(\mathcal W_2, \mathcal A^\sigma)$,
 we wish to compare the quantum and classical moduli spaces of vector bundles, see Def.\thinspace\ref{ncmoduli}. 

For a rank $2$ bundle $E$ on a noncommutative deformation $\mathcal W_2$  with a  canonical  matrix
$\tiny	\begin{pmatrix}
		z^j & {\bf p} \\
		0  & z^{-j}
	\end{pmatrix}$
as in (\ref{ncpoly}) where  ${\bf p} = \sum_{n=0}^\infty p_n \hbar^n$, expression (\ref{poli1})  gives us the general form of the coefficients $p_n$. In particular,  on the first formal neighbourhood, we have: 
\begin{equation}p=\sum_{l=-j+3}^{j-1}p_{l10}z^lu_1+\sum_{l=-j+1}^{j-1}p_{l01}z^lu_2 \end{equation}
where in case $j=1$ we have only $p_{001}u_2$.

To describe the quantum moduli for Poisson structures on $\mathcal W_2$, 
we consider the expression \ref{12term}:
$$	\trevo = \{ p, d +a\} z^j  - \{ z^j, a \} p + \{ z^j, p \} a  +  \{ p d, z^j \} +2 z^{-j} \{ z^j, p \} p c
+ p(d'-a')z^j + (  p' d-q'a )z^j,
$$
where we need to cancel out the coefficients of
$z^{3} u_1, \dotsc, z^{2j-1} u_1 \quad\text{and}\quad zu_2, \dotsc, z^{2j-1} u_2.$

Each noncommutative deformation comes from some Poisson structure. The most basic 
Poisson structures  $\sigma$ on $W_2$ are those which generate all others 
over global functions.
We call these generators the {\it basic} Poisson structures.
Now, we compute the quantum moduli of bundles for them.

\begin{remark*} We observe that the 4 Poisson manifolds $(\mathcal W_2, \sigma_i)$ for $i=1,2,3,4,$ are pairwise nonisomorphic. 
This can be verified by observing that they have have inequivalent degeneracy loci (formed by those symplectic leaves that consist of single points). 
The following table depicts the toric diagrams of their degeneracy loci:
$$
\begin{tabular}{c|c}
\multicolumn{2}{c}{\sc $W_2$ Poisson structures }\\
\multicolumn{2}{c}{} \\
bracket & degeneracy  \\ \hline
$\sigma_1$ & \ccfan \\ \hline
$\sigma_2$  & $\emptyset$\\ \hline
$\sigma_3$  & \Zzero \\ \hline
$\sigma_4$ &  \ccfan $\cup$ \cfan \\ \hline
\end{tabular}
$$
Nevertheless, the 4 quantum moduli spaces defined by these basic Poisson structures turn out to be all isomorphic. 
\end{remark*}

\begin{theorem}\label{iso2} If  $\sigma $ is a basic Poisson structure on $W_2$,
then the quantum moduli space $\mathfrak M^{\hbar}_j(\mathcal W_k, \sigma)$
and its classical limit  are isomorphic:
 $$\mathfrak M^{\hbar}_j(\mathcal W_2, \sigma)\simeq \mathfrak M_j(W_2)\simeq \mathbb P^{4j-5}.$$
 \end{theorem}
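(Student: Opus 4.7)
The plan is to adapt the proof of Theorem~\ref{iso1}, treating the four inequivalent basic Poisson structures $\sigma_1,\sigma_2,\sigma_3,\sigma_4$ on $W_2$ in turn. For each $\sigma=\sigma_i$ I substitute the explicit bracket $\{z^j,-\}_\sigma$ into expression~\ref{12term} and restrict to the first formal neighbourhood $\ell^{(1)}$. The same simplifications as in $W_1$ apply there: the classical $\det=1$ condition forces $d+a=2$ (so $\{p,d+a\}=0$), while $p^2=pp'=pq'=0$ collapses $p'd-q'a$ to $p'-q'$ and $p(d'-a')$ to $p\lambda(z)$ with $\lambda(z)=(d'-a')_0$.

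Using the canonical form of $p$ from~\ref{poli2}, the monomials in $\trevo$ that must be cancelled are $z^lu_1$ for $3\le l\le 2j-1$ and $z^lu_2$ for $1\le l\le 2j-1$; higher $z$-powers are absorbed by the term $z^{2j}b'$. The central step is to show that the linear system in the polynomial coefficients of $a_1(z),a_2(z),c_0(z)$ and $\lambda(z)$ that expresses the vanishing of these monomials is surjective onto $\mathbb C^{4j-4}$, so that for every $q'-p'$ one can solve for the gauge. For $\sigma_1=\partial_z\wedge\partial_{u_1}$ the calculation mirrors the proof of Theorem~\ref{iso1} almost verbatim: the bracket $\{z^j,-\}_{\sigma_1}=jz^{j-1}\partial_{u_1}$ produces a staircase pattern in $z$-degrees that, combined with the free $\lambda$-coefficients, allows recursive cancellation from high to low powers of $z$. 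Parallel computations, with $\partial_{u_1}$ replaced by $\partial_{u_2}$ and the $z$-shift adjusted from $z^{j-1}$ to $z^j$, dispose of $\sigma_2=\partial_z\wedge\partial_{u_2}$ and $\sigma_3=z\partial_z\wedge\partial_{u_2}$. In each of these three cases one concludes that $p+p'\hbar\sim p+q'\hbar$ for all $q'$, so the projection to the classical factor is bijective and, by Lemma~\ref{classicalmoduli}, gives the identification $\mathfrak M^{\hbar}_j(\mathcal W_2,\sigma)\simeq\mathbb P^{4j-5}$.

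The main obstacle is $\sigma_4=u_1\partial_{u_1}\wedge\partial_{u_2}$, for which $\{z^j,-\}_{\sigma_4}\equiv 0$; every Poisson-bracket contribution in~\ref{12term} then vanishes on $\ell^{(1)}$, and the expression collapses to $\trevo=p\lambda(z)z^j+(p'-q')z^j$ with only $\lambda(z)$ available to match all $4j-4$ constraints. Here I would analyse directly the Toeplitz-like matrix obtained by convolving $\lambda$ against the coefficient sequence of the $u_2$-part of $p$; the essential input is the presence of the negative-power term $p_{-j+1,0,1}z^{-j+1}u_2$, a feature specific to $W_2$ and absent on $W_1$, which supplies the pivots needed for solution by back-substitution. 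Should any residual constraint persist, I would finish by invoking the isomorphism between the four quantum moduli spaces asserted in the remark preceding the theorem, transporting the conclusion from $\sigma_1$ to $\sigma_4$ along that identification.
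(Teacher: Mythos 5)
Your outline for $\sigma_1,\sigma_2,\sigma_3$ is consistent with the paper, which in fact computes none of those cases and merely asserts they work out; but your handling of $\sigma_4=u_1\partial_{u_1}\wedge\partial_{u_2}$ --- the one case the paper actually proves --- contains the decisive error. From $\{z^j,\cdot\}_{\sigma_4}\equiv 0$ you conclude that \emph{every} bracket contribution to \ref{12term} vanishes on $\ell^{(1)}$. That kills four of the five bracket terms, but not $\{p,d+a\}z^j$: since $\sigma_4$ carries only a single factor of $u_1$, one has
$\{p,d+a\}_{\sigma_4}=u_1\bigl[(\partial_{u_1}p)\,(d_2+a_2)-(\partial_{u_2}p)\,(d_1+a_1)\bigr]$,
which is linear in $u_1$ and therefore survives on the first formal neighbourhood. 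This surviving term is exactly what the paper's proof lives on: the free functions $d_i+a_i$ are chosen to cancel all the $u_1$-monomials, after which $d_0'-a_0'$ disposes of the $u_2$-monomials, leaving no constraint on $q'$. Your auxiliary claim that the $\det=1$ normalisation forces $d+a=2$ (hence $\{p,d+a\}=0$) discards precisely this gauge freedom; the paper's $W_2$ computation keeps $a$ and $d$ independent. Once $\{p,d+a\}$ is dropped, your collapsed expression $p\,\lambda(z)z^j+(p'-q')z^j$ is literally the extremal case, and the paper's example immediately following the theorem ($\sigma=u_1\sigma_4$, $j=2$) shows that the resulting convolution system in $\lambda$ alone has corank at least $1$ generically --- the negative-power ``pivots'' you invoke do not make it surjective --- so your computation would (falsely) conclude that the quantum and classical moduli spaces differ.

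Your fallback of transporting the conclusion from $\sigma_1$ to $\sigma_4$ via the remark preceding the theorem is circular: that remark records the \emph{consequence} of Theorem \ref{iso2}, and the same remark states that the four Poisson manifolds $(\mathcal W_2,\sigma_i)$ are pairwise nonisomorphic, so there is no Poisson isomorphism along which to transport anything (and even an abstract isomorphism of total spaces would not identify them as sheaves over $\mathfrak M_j(W_2)$). The $\sigma_4$ case must be computed directly, retaining the $\{p,d+a\}$ term.
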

 
\begin{proof}\label{exW2e2}
We carry out 
calculations for the basic bracket $\sigma_4= u_1\partial u_1\wedge \partial u_2.$ 
It does turn out that the result is the same for the the basic brackets.
The calculation for $\sigma_4$ is shorter, since  any of the brackets having one entry equal to $z^j$  vanishes. 
Because we work on the first formal neighbourhood, we also remove the expressions that 
are quadratic in the $u_i$ variables. 

So, the expression \ref{12term} that remains to be analysed simplifies to:
$$\trevo = \{ p, d +a\}z^j  	+ p(d'-a')z^j + (  p' d-q'a )z^j,$$
where we must cancel out the coefficients of the monomials
$
z^{3} u_1, \dotsc, z^{2j-1} u_1 \, \text{and}\, zu_2, \dotsc, z^{2j-1} u_2.
$
On the first formal neighbourhood, we write 
$$a= 1 + a_1(z) u_1+a_2(z)u_2,\quad d= 1 + d_1(z) u_1+d_2(z)u_2, \quad \text{and}$$
$$a'= a'_0(z) + a'_1(z) u_1+a'_2(z)u_2,\quad d'= d'_0(z) + d'_1(z) u_1+d'_2(z)u_2,$$
so that the partials are
$$\partial u_i a=  a_i(z) \quad \partial u_i d=  d_i(z) \quad \text{and} \quad \partial u_2 a=  a_2(z) \quad \partial u_2 d=  d_2(z) . $$
The extension class given in  (\ref{poli2})   becomes
$\displaystyle p =\sum_{l=3-j}^{j-1}p_{l10}z^lu_1 + \sum_{l=1-j}^{j-1}p_{l01}z^lu_2,  $
and computing the bracket gives
$$ \{ p, d + a \}z^j=  \left(\sum_{l=3-j}^{j-1}p_{l10}z^l \right)(d_2(z)+a_2(z))z^ju_1 + \left(\sum_{l=1-j}^{j-1}p_{l01}z^l\right)(d_1(z)+a_1(z))z^ju_1.$$

To work with a simpler notation, we present details  of  \ref{12term} 
 when  $j=2$, in which  case we can express the extension class as 
 $$p=p_0zu_1 +p_1zu_2+ p_2u_2+p_3z^{-1}u_2,$$
having renamed the coefficients  for simplicity
(making  $p_0\ce p_{110} ,
p_1\ce p_{101}, 
p_2\ce p_{001},
p_3\ce p_{-101}$). We will point out the steps for   generalising to higher $j$.

Assuming $j=2$, we have
$$\{ p, d + a \}z^2=  p_0(d_2(z)+a_2(z))z^3u_1 + (p_{1}z^3+p_2z^2 + p_3z)(d_1(z)+a_1(z))u_1 .$$
To obtain equivalence between $q'$ and $p'$, we  must cancel out coefficients of  $z^3u_1, zu_2, z^2u_2, z^3u_2$ in the expression of \ref{12term},
which becomes
 \begin{eqnarray*}
\trevo&=&p_0(d_2(z)+a_2(z))z^3u_1 + (p_{1}z^3+p_2z^2 + p_3z)(d_1(z)+a_1(z))u_1\\
&& +(p_0z^3u_1 +p_1z^3u_2+p_2z^2u_2+ p_3zu_2)
(d'_0(z)-a'_0(z)) \\
&&+(p'_0z^3u_1 +p'_1z^3u_2+p'_2z^2u_2+ p'_3zu_2)
 \\
&&-(q'_0z^3u_1+q'_1z^3u_2 +q'_2z^2u_2+ q'_3zu_2) .
 \end{eqnarray*}
Since the highest power of $z$ to be considered is $3$,  we observe that  $d_2(z) +a_2(z)$ may be chosen  conveniently, 
we cancel out all terms in $z^3 u_1$. We may also choose $d_1(z)+a_1(z)=0$,  leaving 
 \begin{eqnarray*}
\trevo &=& (p_1z^3u_2+p_2z^2u_2+ p_3zu_2)
(d'_0(z)-a'_0(z)) \\
&&+(p'_1z^3u_2+p'_2z^2u_2+ p'_3zu_2) \\
& &-(q'_1z^3u_2 +q'_2z^2u_2+ q'_3zu_2).
 \end{eqnarray*}
 Now we may  choose $d'_0-a'_0$ appropriately to cancel out all terms in $u_2$. 
 We conclude that there are no conditions imposed on $q'$. In other words, here $p+p'\hbar$ is equivalent to  $p+q'\hbar $ 
 for any choice of  $q'$. Hence, the quantum and classical  moduli spaces 
 are isomorphic. 
 
 The generalisation to higher $j$ works out similarly, we can first choose $d_i+a_i$ for $i>0$ to cancel out the 
 coefficients of $u_1$ and then choose $d'_0-a'_0$ to take care of the coefficients of $u_2$. So, for all $j$ 
 using the bracket $\sigma_4$ we conclude that 
 the quantum and classical  moduli spaces are isomorphic
 $$\mathfrak M^{\hbar}_j(\mathcal W_2, \sigma_4)\simeq \mathfrak M_j(W_2)\simeq \mathbb P^{4j-5}$$
 where the second isomorphism is proven in \cite[Prop.\thinspace 3.24]{K}.
  \end{proof}

\begin{example} Now choose any Poisson structure of $W_2$ for which all brackets in \ref{12term} 
vanish on neighbourhood 1, for example $\sigma=u_1\sigma_4 = u_1^2\partial u_1\wedge \partial u_2$ works. 
In such a case, the expression for \ref{12term} reduces to:
$$\trevo =  p(d'-a')z^j + (  p' d-q'a )z^j.$$
Now, consider  the case of $j=2$, when we have:
 \begin{eqnarray*}
\trevo &=& (p_0z^3u_1 +p_1z^3u_2+p_2z^2u_2+ p_3zu_2)+
(d'_0(z)-a'_0(z)) \\
&&+(p'_0z^3u_1 +p'_1z^3u_2+p'_2z^2u_2+ p'_3zu_2)
 \\
&&- (q'_0z^3u_1+q'_1z^3u_2 +q'_2z^2u_2+ q'_3zu_2).
 \end{eqnarray*}
Setting
$$d'_0(z)-a'_0(z)= \lambda_0 +\lambda_1z +\lambda_2z^2,$$
we get a system of equations:
$$
\begin{pmatrix}
q'_{0}- p'_{0} \\
q'_{1}- p'_{1}\\
q'_{2}- p'_{2} \\
q'_{3}- p'_{3}\\
\end{pmatrix}
=
\begin{pmatrix}
\lambda_0 & 0& 0& 0 \\
 0 & \lambda_0 &\lambda_1& \lambda_2\\
 0&0& \lambda_0 &\lambda_1\\
 0&0&0&\lambda_0
\end{pmatrix}
\begin{pmatrix}
p_{0} \\
p_{1}\\
p_{2}\\
p_{3}\\
\end{pmatrix}.
$$

Since we can choose $\lambda_1$ and $\lambda_2$ to solve the second and third equations, we see that 
$q'_1$ and $q'_2$ are free. Hence
$(q'_0,q'_1,q'_2,q'_3) \sim \lambda_0(q'_0, *,*,q'_3),$
and our system of equations reduces to
$$
\begin{pmatrix}
q'_{0}- p'_{0} \\
q'_{3}- p'_{3}\\
\end{pmatrix}
=\lambda_0
\begin{pmatrix}
p_{0} \\
p_{3}\\
\end{pmatrix},
$$
which is  the parametric equation of a line in the $(q'_0,q'_3)$-plane
whenever $(p_0, p_3)\neq (0,0)$.
The entire question of moduli now reduces to the 2-dimensional case, disregarding $p_1,p_2$ coordinates.

If $(p_0, p_3)\neq (0,0)$, then  the equivalence class of $q'$ in the fibre over the point $p$ 
is the 1-dimensional subspace $L$ directed by the vector $(p_0, p_3)$ and passing through $(q'_0,q'_3)$
in the $(p_0',p_3')$-plane. 

If $p_0=p_3=0$, then we must have the equality $(q'_0,q'_3) = (p'_0,p'_3)$. 
So, its the equivalence class consists of a single point.

Accordingly, the set of equivalence classes over $p$ can be represented either 
by the line $L^\perp$ by the origin perpendicular to $L$ (directed by $(-p_3,p_0)$ when $(p_0, p_3)\neq (0,0)$
or else by the entire $(p'_0,p'_3)$-plane over $(0,0)$.

We conclude that $  \mathfrak M^{\hbar}_2(\mathcal W_2, \sigma)\rightarrow \mathfrak M_2(W_2)\simeq \mathbb P^3
$ (where the isomorphism is given by Lem.~\ref{classicalmoduli})
is the \'etale space of a constructible sheaf, whose stalks have  
\begin{itemize}
\item  dimension  1 over the Zariski open set 
$(p_0, p_3)\neq (0,0)$,
and
\item  dimension 2  over the $\mathbb P^1$  cut out by $p_0=p_3=0$ in $\mathbb P^3$.
\end{itemize}
In fact, we could express this moduli space as a sheaf given by an extension of $ \mathcal O_{\mathbb P^3}(+1)$
by  a torsion sheaf.
\end{example}

\begin{theorem}\label{e2}
If $\sigma$ is an extremal Poisson structure on $W_2$, then
 the quantum moduli space $\mathfrak M_j^{\hbar} (\mathcal W_2, \sigma)$ can 
be viewed as the étale space of a constructible sheaf  of generic rank $2j-3$ over the classical moduli space 
$\mathfrak M_j (W_2)$ with singular stalks up to   rank $4j -6$.
\end{theorem}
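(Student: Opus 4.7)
The plan is to imitate the proof of Thm.~\ref{e1} almost verbatim, adapting the matrix construction to the extension class shape dictated by expression~(\ref{poli2}) for $W_2$. The first step is to exploit extremality of $\sigma$: since every Poisson bracket vanishes identically on $\ell^{(1)}$, the first four summands of~\ref{12term} drop out, reducing the isomorphism equation to
\[
\trevo = p(d'-a')z^j + (p'd - q'a)z^j.
\]
Normalising the classical part as in the proof of Thm.~\ref{iso2} ($a_0=d_0=1$), the free parameters collapse into the coefficients $\lambda_0,\dots,\lambda_{2j-2}$ of the polynomial $(d'_0-a'_0)(z)$, the degree cut-off being dictated by which monomials still need to be cancelled (namely $z^i u_1$ for $3\leq i\leq 2j-1$ and $z^i u_2$ for $1\leq i\leq 2j-1$, higher powers of $z$ being absorbed into $z^{2j}b'$).

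Next, the isomorphism condition becomes a linear system $q'-p' = \mathfrak{U}_p\cdot(\lambda_0,\dots,\lambda_{2j-2})^{\top}$, where $\mathfrak{U}_p$ is a $(4j-4)\times(2j-1)$ matrix with block structure: the upper $2j-3$ rows are built from the $u_1$-coefficients $p_{l10}$ for $3-j\leq l\leq j-1$, while the lower $2j-1$ rows are built from the $u_2$-coefficients $p_{l01}$ for $1-j\leq l\leq j-1$. Each block is a shift matrix whose column indexed by $\lambda_m$ is obtained by multiplying $p$ by $z^{m+j}$ and reading off the coefficients in the relevant range. As in Thm.~\ref{e1}, the equivalence class of $p'$ over a classical point $p$ is the affine subspace $p'+\mathrm{Image}(\mathfrak{U}_p)$, so
\[
\dim \mathfrak{M}^{\hbar}_j(\mathcal{W}_2,\sigma)_p \;=\; \mathrm{corank}\,\mathfrak{U}_p \;=\; (4j-4)-\rk\mathfrak{U}_p.
\]

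The third step is the rank analysis. For a generic $p$ the $2j-1$ columns of $\mathfrak{U}_p$ are linearly independent (the shift pattern produces a Vandermonde-style independence whenever the leading coefficients of $p$ are nonzero), giving $\rk\mathfrak{U}_p=2j-1$ and generic stalk dimension $2j-3$, the claimed generic rank of the sheaf $\mathcal{E}_2$. As $p$ specialises to loci where extremal coefficients of $p$ vanish, $\rk\mathfrak{U}_p$ drops, with the bound $\rk\mathfrak{U}_p\geq 2$ on the stable locus yielding the maximal stalk dimension $4j-6$. The $j=2$ case worked out in the preceding example already exhibits the generic stalk of dimension $1=2j-3$ and the singular stalk of dimension $2=4j-6$ on the $\mathbb{P}^1$ cut out by $p_0=p_3=0$ in $\mathbb{P}^3$; the general case is the same picture indexed by longer shift patterns.

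The main obstacle will be establishing the lower rank bound $\rk\mathfrak{U}_p\geq 2$ on the stable locus. A priori the shift structure allows the rank to fall to $1$ when $p$ degenerates to a single monomial, so the bound must be extracted from the stability hypothesis together with the fact that $W_2$ has a two-component extension class: the $u_1$- and $u_2$-blocks of $\mathfrak{U}_p$ cannot both simultaneously collapse to rank $\leq 1$ for stable bundles. Verifying this reduces to translating non-splitting on the $0$-th formal neighbourhood into a non-vanishing statement on the leading coefficients of $p$, which through the shift pattern forces two linearly independent columns of $\mathfrak{U}_p$ to survive. Once the rank bounds are in place, the stratification of $\mathfrak{M}_j(W_2)$ by $\rk\mathfrak{U}_p$ is a direct combinatorial exercise paralleling the $W_1$ analysis of Thm.~\ref{e1}.
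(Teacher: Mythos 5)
Your overall strategy coincides with the paper's: extremality kills every bracket in \ref{12term}, the isomorphism condition over a classical point $p$ becomes the linear system $q'-p'=\mathfrak U_p\lambda$ in the coefficients $\lambda_0,\dots,\lambda_{2j-2}$ of $(d'_0-a'_0)(z)$, and the stalk of the quantum moduli space at $p$ is the corank of a shift-structured matrix; the generic count $2j-3$ comes out the same way in both arguments. The one place you genuinely diverge is how the upper bound $4j-6$ is obtained. The paper never proves a lower bound on $\rk\mathfrak U_p$: it first uses the top two parameters $\lambda_{2j-3},\lambda_{2j-2}$ to absorb the equations for the two highest $u_2$-coefficients of $q'$ (for $j=3$, the coefficients of $z^5u_2$ and $z^4u_2$), declares those coordinates free, and deletes two rows and two columns. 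The reduced matrix is $(4j-6)\times(2j-3)$, so its corank is bounded by $4j-6$ with no rank estimate at all.

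Your substitute for that step --- the bound $\rk\mathfrak U_p\ge 2$ on the stable locus, to be ``extracted from the stability hypothesis'' --- is the genuine gap, and the proposed route will not close it. Stability here only says that the classical limit does not split, i.e.\ $p\neq 0$; the classical moduli space is all of $\mathbb P^{4j-5}$, so no constraint on the \emph{shape} of $p$ is available. And there are nonzero $p$ at which your full $(4j-4)\times(2j-1)$ matrix has rank exactly $1$: take $p$ to be the single monomial $z^{j-1}u_2$ (for $j=3$, $p=p_3z^2u_2$). Then $pz^j=p_3z^{2j-1}u_2$, every column $\lambda_m$ with $m\ge 1$ is pushed into the range $z^{\ge 2j}$ absorbed by $z^{2j}b'$, and $\mathfrak U_p$ has a single nonzero entry; your corank formula then gives a stalk of dimension $4j-5>4j-6$. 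So the bound $4j-6$ cannot be reached by a rank estimate on the full matrix; you must reproduce the paper's preliminary absorption of the two top $u_2$-rows, which is what makes $4j-6$ appear structurally. (Be aware that this absorption requires the relevant $2\times 2$ minor in $\lambda_{2j-3},\lambda_{2j-2}$ to be invertible --- essentially the nonvanishing of the lowest $u_2$-coefficient $p_{1-j,0,1}$ --- and it degenerates at exactly the monomial points above; so your observation that the rank can drop to $1$ is not spurious, it is pointing at a locus the paper's own reduction treats only generically.)
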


\begin{proof}
Now, for  $j=3$, we write down the extremal example when the brackets vanish 
on the first formal neighbourhood.

Where, assuming all brackets vanish on the first formal neighbourhood, 
we need to cancel out 
 the coefficients of
$z^{3} u_1, \dotsc, z^{2j-1} u_1 \quad\text{and}\quad zu_2, \dotsc, z^{2j-1} u_2$ in 
$$\trevo =  p(d'-a')z^j + (  p' d-q'a )z^j.$$
For $j=3$ we have
$$ p =\sum_{l=0}^{2}p_{l10}z^lu_1 + \sum_{l=-2}^{2}p_{l01}z^lu_2,  $$
which we rewrite as
$$ p=p_0 z^2u_1 + p_1zu_1 + p_2 u_1+ p_{3}z^{2}u_2+p_4z^{1}u_2+p_5u_2+p_6z^{–1}u_2 + p_7z^{-2}u_2.$$
Setting
$$d'_0(z)-a'_0(z)= \lambda_0 +\lambda_1z +\lambda_2z^2+ \lambda_3z^3 + \lambda_{4}z^{4},$$
expression
$$\trevo =  p(d'-a')z^3 + (  p' d-q'a )z^3$$
becomes
\begin{eqnarray*}
\trevo&=&\left(p_0 z^5u_1 + p_1z^4u_1 + p_2 z^3u_1+ p_{3}z^5u_2+p_4z^{4}u_2+p_5z^3u_2+p_6z^2u_2 + p_7zu_2
\right)\\
&&\cdot( \lambda_0 +\lambda_1z +\lambda_2z^2+ \lambda_3z^3 + \lambda_{4}z^{4}) \\
&&+
(p'-q')_0 z^5u_1 + (p'-q')_1z^4u_1 + (p'-q')_2 z^3u_1\\
&&+ (p'-q')_{3}z^5u_2+(p'-q')_4z^{4}u_2+(p'-q')_5z^3u_2+(p'-q')_6z^2u_2 + (p'-q')_7zu_2.	
\end{eqnarray*}

Then the solution of our problem is now 

\[
\begin{pmatrix}
q'_{0}- p'_{0} \\
q'_{1}- p'_{1}\\
q'_{2}- p'_{2}\\
q'_{3}- p'_{3}\\
q'_{4}- p'_{4}\\
q'_{5}- p'_{5}\\
q'_{6}- p'_{6}\\
q'_{7}- p'_{7}\\
\end{pmatrix}
=
\lambda_0
\begin{pmatrix}
p_{0} \\
p_{1}\\
p_{2}\\
p_{3}\\
p_{4}\\
p_{5}\\
p_{6}\\
p_{7}\\
\end{pmatrix}+
\lambda_1
\begin{pmatrix}
p_{1}\\
p_{2}\\
0\\
p_4\\
p_5\\
p_{6}\\
p_{7}\\
0
\end{pmatrix}+
\lambda_2
\begin{pmatrix}
p_{2}\\
0\\
0\\
p_5\\
p_6\\
p_{7}\\
0 \\
0\\
\end{pmatrix}+
\lambda_3
\begin{pmatrix}
0\\
0\\
0\\
p_6\\
p_7\\
0\\
0\\
0\\
\end{pmatrix}+
\lambda_4
\begin{pmatrix}
0\\
0\\
0\\
p_7\\
0\\
0\\
0\\
0\\
\end{pmatrix}=
\begin{pmatrix}
p_{0} & p_1 & p_2 & 0 & 0 \\
p_{1} & p_2 & 0   & 0 & 0\\
p_{2} & 0   & 0   & 0 & 0\\
p_3 & p_4 & p_5 & p_6 & p_7 \\
p_4 & p_5 & p_6 & p_7 & 0 \\
p_{5}& p_6 & p_7 & 0 & 0 \\
p_{6}& p_7 & 0 & 0 & 0\\
p_{7}& 0& 0 & 0 & 0\\
\end{pmatrix}
\begin{pmatrix}
\lambda_0 \\
\lambda_1 \\
\lambda_2 \\
\lambda_3 \\
\lambda_4
\end{pmatrix}.
\]

Notice that $\lambda_3$ and $\lambda_4$ can always be chosen to 
solve the equations involving $q'_3$ and $q'_4$ so that these 2 coordinates can take 
any value, that is, there are isomorphisms
\[
(q'_0,q'_1,q'_2,q'_3,q'_4,q'_5,q'_6,q'_7) \sim (q'_0,q'_1,q'_2, *,*,q'_5,q'_6,q'_7).
\]
Consequently, we may remove $q'_3$ and $q'_4$ and rewrite the reduced system as:
 \[
\begin{pmatrix}
q'_{0}- p'_{0} \\
q'_{1}- p'_{1}\\
q'_{2}- p'_{2}\\
q'_{5}- p'_{5}\\
q'_{6}- p'_{6}\\
q'_{7}- p'_{7}\\
\end{pmatrix}
=
\lambda_0
\begin{pmatrix}
p_{0} \\
p_{1}\\
p_{2}\\
p_{5}\\
p_{6}\\
p_{7}\\
\end{pmatrix}+
\lambda_1
\begin{pmatrix}
p_{1}\\
p_{2}\\
0\\
p_{6}\\
p_{7}\\
0
\end{pmatrix}+
\lambda_2
\begin{pmatrix}
p_{2}\\
0\\
0\\
p_{7}\\
0 \\
0\\
\end{pmatrix}=
\begin{pmatrix}
p_{0} & p_1 & p_2 \\
p_{1}& p_2 & 0 \\
p_{2}& 0&0\\
p_{5}& p_6 & p_7 \\
p_{6}& p_7 & 0 \\
p_{7}& 0& 0\\
\end{pmatrix}
\begin{pmatrix}
\lambda_0 \\
\lambda_1 \\
\lambda_2
\end{pmatrix}.
\]
 Here $q' \sim p'$ if and only if the  equality holds for some choice of $\lambda_0, \lambda_1, \lambda_2$.
 Consider now the family $\mathfrak U$ of vector spaces over $\mathfrak M_2(W_2)\simeq \mathbb P^7$ whose
fibre at $p$ is given by 
\[
\mathfrak U_p=
\begin{pmatrix}
p_{0} & p_1 & p_2 \\
p_{1}& p_2 & 0 \\
p_{2}& 0&0\\
p_{5}& p_6 & p_7 \\
p_{6}& p_7 & 0 \\
p_{7}& 0& 0\\
\end{pmatrix}
=
\begin{pmatrix}
H_3(p_0, p_1, p_2, 0, 0) \vspace{6pt} \\
H_3(p_5, p_6, p_7, 0, 0)
\end{pmatrix},
\]
where we use $H_m(h_0, \ldots, h_{2m-1})$ to denote the $m \times m$ Hankel matrix
\[
\begin{pmatrix}
h_0 & h_1 & h_2 & \cdots & h_{m-2} & h_{m-1}  & h_m \\
h_1 & h_2 & h_3 & \cdots & h_{m-1} & h_m 	& h_{m+1} \\
h_2 & h_3 & h_4 & \cdots & h_m		& h_{m+1}  & h_{m+2} \\
\vdots &&& \ddots  &&& \vdots \\
h_{m-2} & h_{m-1} & h_m & \cdots & h_{2m-5} & h_{2m-4} & h_{2m-3}\\
h_{m-1}& h_m & h_{m+1} & \cdots & h_{2m-4} & h_{2m-3} & h_{2m-2} \\
h_m & h_{m+1} &h_{m+2} & \cdots & h_{2m-3}& h_{2m-2} & h_{2m-1} 
\end{pmatrix}.
\]
Now, the quantum moduli space is obtained from this family after dividing by 
the equivalence relation $\sim$ over each point $p$.
Hence 
\[
\mathfrak M^{\hbar}_2(\mathcal W_2, \sigma) = \mathfrak U/ \sim.
\]
%

We conclude that $\mathfrak M^{\hbar}_2(\mathcal W_2, \sigma)\rightarrow \mathfrak M_2(W_2)\simeq \mathbb P^7
$ (where the isomorphism is given by Lem.~\ref{classicalmoduli})
is the \'etale space of a constructible sheaf, with stalk at $p$  having  dimension equal to the 
corank of $\mathfrak U_p$, in this case
\[
 3\leq \dim {\mathfrak M^{\hbar}_2(\mathcal W_2, \sigma)}_ p =
 \mbox{corank}\,  \mathfrak U_p= 6-\rk \mathfrak U_p\leq 6.\]
 Now we generalize the result for any $j$. 
 The calculation is made using the same steps as in the case $j=3$ 
by writing down separately equations for each individual monomial $z^lu_1$ or $z^lu_2$ and assuming that all the brackets in the term $\trevo$ vanish. 
 We then check which terms can be cancelled out by an appropriate choice of $\lambda$.

Carrying out the calculations we obtain the linear system
\[
\begin{pmatrix}
q'_{0}- p'_{0} \\
\vdots \\
q'_{4j-5}- p'_{4j-5}\\
\end{pmatrix}=
\begin{pmatrix}
H_{2j-3}(p_0, \ldots, p_{2j-4}, 0, \ldots, 0)  \quad {\bf 0} \, \vspace{6pt}\\
H_{2j-1}(p_{2j-3}, \ldots, p_{4j-5}, 0, \ldots, 0)\\
\end{pmatrix}
\begin{pmatrix}
\lambda_0 \\
\vdots    \\
\lambda_{2j-2} \\
\end{pmatrix},
\]
where the matrix of the coefficients contains two Hankel blocks and a zero matrix with two columns.
Analogously to the case $j_3$, we can always choose $\lambda_{2j-3}$ and $\lambda_{2j-2}$ to solve the equations involving $q'_{2j-3}$ and $q'_{2j-2}$. We have then the simplified system
\[
\begin{pmatrix}
q'_{0}- p'_{0} \\
\vdots \\
q'_{2j-4}- p'_{2j-4}\\
q'_{2j-1}- p'_{2j-1}\\
\vdots \\
q'_{4j-5}- p'_{4j-5}\\
\end{pmatrix}=
\underbrace{
\begin{pmatrix}
H_{2j-3}(p_0, \ldots, p_{2j-4}, 0, \ldots, 0) \vspace{6pt}\\
H_{2j-3}(p_{2j-1}, \ldots, p_{4j-5}, 0, \ldots, 0)\\
\end{pmatrix}
}_{\mathfrak{U}_p}
\begin{pmatrix}
\lambda_0 \\
\vdots    \\
\lambda_{2j-4} \\
\end{pmatrix}.
\] 
We arrive then at the conclusion
\[
 2j-3 \leq \dim {\mathfrak M^{\hbar}_j(\mathcal W_2, \sigma)}_ p =\mbox{corank}\,  \mathfrak U_p=
   2j - \rk \mathfrak U_p\leq 4j-6.
\]
\end{proof}

\appendix
\section{Computations of $H^1$}
\label{H1-Wk-O}
\begin{lemma}\label{van}
$H^1(W_1, \mathcal{O}_{W_1})= H^1(W_2, \mathcal{O}_{W_2})=0.$
\end{lemma}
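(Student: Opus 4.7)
The plan is to compute $H^1(W_k, \mathcal{O})$ via Čech cohomology with respect to the canonical cover $\{U, V\}$. Since $U \simeq V \simeq \mathbb{C}^3$ and $U \cap V \simeq \mathbb{C}^* \times \mathbb{C}^2$ are all Stein, by Cartan's theorem B the cover is acyclic for $\mathcal{O}$, and the Čech complex collapses to
\[
H^1(W_k, \mathcal{O}) \;\simeq\; \mathcal{O}(U \cap V)\big/\bigl(\mathcal{O}(U) + \mathcal{O}(V)\bigr),
\]
where both summands are included in $\mathcal{O}(U \cap V)$ via restriction.

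The core of the argument is a monomial-by-monomial analysis. Any holomorphic function on $U \cap V$ admits a convergent Laurent expansion in $z$ with holomorphic coefficients in $u_1, u_2$, so it suffices to decide when a single monomial $z^n u_1^i u_2^j$ lies in the image of the restriction map. Such a monomial extends across $z = 0$ (i.e., to $U$) exactly when $n \geq 0$; using the transition laws $\xi = z^{-1}$, $v_1 = z^k u_1$, $v_2 = z^{2-k}u_2$, it rewrites in $V$-coordinates as $\xi^{-n + ki + (2-k)j}\, v_1^i v_2^j$ and hence extends to $V$ exactly when $n \leq ki + (2-k)j$.

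For $k = 1$ the $V$-condition reads $n \leq i + j$ and for $k = 2$ it reads $n \leq 2i$; in both cases the right-hand side is nonnegative, so every monomial satisfies at least one of the two inequalities. Consequently the map $\mathcal{O}(U) \oplus \mathcal{O}(V) \to \mathcal{O}(U \cap V)$ is surjective, and $H^1(W_k, \mathcal{O}) = 0$ for $k = 1, 2$.

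The main subtlety I anticipate is legitimising the monomial-wise reasoning at the level of genuinely holomorphic functions: given $f \in \mathcal{O}(U \cap V)$ with Laurent expansion in $z$, one splits it into its nonnegative and negative powers of $z$ and checks that each piece converges on the appropriate chart, producing the required Čech splitting. The same bookkeeping makes transparent why the argument breaks for $k \geq 3$: already $z^{-1} u_2^2$ on $W_3$ satisfies neither $n \geq 0$ nor $n \leq 3i - j = -2$, and so yields a nonzero Čech class, consistent with the remark following Prop.~\ref{lineb} that $\Pic(W_3)$ is much larger.
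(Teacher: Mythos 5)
Your proof is correct and follows essentially the same route as the paper's: a \v{C}ech computation on the two-chart cover in which every monomial with a negative power of $z$ is shown to become holomorphic on $V$ because the exponent $-l+ki+(2-k)s$ of $\xi$ is nonnegative for $k=1,2$. Your explicit invocation of Leray acyclicity for the Stein cover and your attention to convergence when splitting the Laurent series are reasonable refinements the paper leaves implicit, but the argument is the same.
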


\begin{proof}
A 1-cocycle $\tau \in \mathcal O(U \cap V)$ may be written in the form
$$\displaystyle
\tau_U = \sum_{l=-\infty}^\infty \sum_{i=0}^\infty \sum_{s=0}^\infty \tau_{lis}z^lu_1^iu_2^s.
$$
Since terms containing only positive powers of $z$ are holomorphic on the $U$-chart 
\[
\tau_U \sim \sum_{l=-\infty}^{-1} \sum_{i=0}^\infty \sum_{s=0}^\infty \tau_{lis}z^lu_1^iu_2^s,
\]
where $\sim$ denotes cohomological equivalence.
Changing to $V$ coordinates we have
\begin{equation}\label{h1}
\tau_V = \sum_{l=-\infty}^{-1} \sum_{i=0}^\infty \sum_{s=0}^\infty \tau_{lis}\xi^{-l+ki+(-k+2)s} v_1^iv_2^s,
\end{equation}
where, for  $k=1,2$  exponents of $\xi$ are  non-negative. Thus, $\tau_V $ is holomorphic on $V$, and $\tau \sim 0$.
\end{proof}

\begin{lemma}\label{muchlarger}
 $H^1(W_3, \mathcal O_{W_3})$ is infinite dimensional over $\mathbb C$.
 \end{lemma}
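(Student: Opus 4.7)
The plan is to compute $H^1(W_3, \mathcal O)$ directly via Čech cohomology with respect to the affine cover $\{U, V\}$, which is acyclic since $U \simeq V \simeq \mathbb C^3$. Under this identification $H^1(W_3, \mathcal O)$ is $\mathcal O(U \cap V)$ modulo the sum $\mathcal O(U) + \mathcal O(V)$ of restrictions, exactly in the framework of the proof of Lem.~\ref{van}. The strategy is then to exhibit an infinite linearly independent family in the quotient.

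First I would expand an arbitrary cocycle as a Laurent series $\tau = \sum \tau_{lis} z^l u_1^i u_2^s$ on $U \cap V$ and transform to $V$-coordinates using the $k=3$ transition $(\xi, v_1, v_2) = (z^{-1}, z^3 u_1, z^{-1} u_2)$. The monomials convert as $z^l u_1^i u_2^s = \xi^{-l+3i-s} v_1^i v_2^s$. The crucial difference from the $k=1,2$ cases treated in Lem.~\ref{van} is that the $\xi$-exponent $-l+3i-s$ need not be non-negative when $l\leq -1$: the coefficient $-k+2$ of $s$ has now become $-1$, so large values of $s$ obstruct absorbing negative powers of $z$ into functions holomorphic on $V$. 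This makes the proof of Lem.~\ref{van} fail, and simultaneously flags which monomials to exploit.

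Next I would exhibit an explicit infinite family of nontrivial classes, the simplest choice being $\tau_s \ce z^{-1} u_2^s$ for $s \geq 2$. In $V$-coordinates each reads $\xi^{1-s} v_2^s$, which has a pole of order $s-1$ at $\xi=0$, so $\tau_s \notin \mathcal O(V)$; likewise $\tau_s \notin \mathcal O(U)$ because of the $z^{-1}$ factor. To verify linear independence modulo coboundaries, I would suppose a finite combination $\sum_s c_s \tau_s = f_U - f_V$ with $f_U \in \mathcal O(U)$ and $f_V \in \mathcal O(V)$, then compare coefficients of the monomial $z^{-1} u_2^s$ in the Laurent expansion on $U \cap V$. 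Since $f_U$ contributes only non-negative powers of $z$, this coefficient must come entirely from $f_V$; writing $f_V = \sum b_{mjt} \xi^m v_1^j v_2^t$ with $m, j, t \geq 0$ we have $\xi^m v_1^j v_2^t = z^{-m+3j-t} u_1^j u_2^t$, so selecting $u_1^0 u_2^s$ forces $j=0$, $t=s$ and $z$-exponent $-m-s = -1$, i.e.\ $m = 1-s$, which is forbidden for $s \geq 2$. Hence $c_s = 0$ for all $s$, establishing linear independence.

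The only real obstacle is the bookkeeping in the coefficient comparison, but the closed-form monomial transformation above makes this essentially automatic. Thus $\{[\tau_s]\}_{s\geq 2}$ is a countably infinite linearly independent family in $H^1(W_3, \mathcal O)$, proving the lemma.
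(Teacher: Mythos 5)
Your proof is correct and follows essentially the same route as the paper: a \v{C}ech computation on the two-chart cover $\{U,V\}$, converting monomials by $z^l u_1^i u_2^s = \xi^{-l+3i-s}v_1^iv_2^s$ and observing that for $k=3$ the $\xi$-exponent can be negative. Your explicit family $\tau_s = z^{-1}u_2^s$, $s\ge 2$, together with the coefficient comparison ruling out coboundaries, is in fact a more careful justification of the independence that the paper's proof simply asserts.
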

 
 \begin{proof} As in the proof of Lem.~\ref{van}
 we arrive at the expression (\ref{h1}) for the 1-cocycle $\tau$ on the $V$-chart,
which in the case $k=3$, gives
\[
\tau_V \sim \sum_{l=-\infty}^{-1} \sum_{i=0}^\infty \sum_{s=0}^\infty \tau_{lis}\xi^{-l+3i-s} v_1^iv_2^s.
\]

The terms that are not holomorphic on $V$ are all of those satisfying 
$-l+3i-s<0.$

We conclude that all  terms having $s> 3i-l$, namely all of
$$ \sum_{l=-\infty}^{-1} \sum_{i=0}^\infty \sum_{s= 3i-l+1}^\infty \tau_{lis}z^{l}u_1^iu_2^s$$
 are nontrivial in first cohomology, so that 
 $\displaystyle \dim H^1(W_3,\mathcal O_{W_3})= \infty.$
 \end{proof}

\textbf{Data availability:}
Data sharing is not applicable to this article as no datasets were generated or analyzed during the current study.

\textbf{Conflict of interest:}
On behalf of all authors, the corresponding author states that there is no conflict of interest. 

\paragraph{\bf Acknowledgements}  
 E. Ballico is a member of  GNSAGA of INdAM (Italy). 
 E. Gasparim is a senior associate of the Abdus Salam International 
 Centre for Theoretical Physics, Trieste (Italy).
   F. Rubilar acknowledges support of ANID-FAPESP cooperation 2019/13204-0.
 B. Suzuki  was supported by Grant 2021/11750-7 S\~ao Paulo Research Foundation - FAPESP.\\

\end{document}